\sloppy \pagestyle{plain}
\newtheorem{theorem}[equation]{Theorem}
\newtheorem*{theorem*}{Theorem}
\newtheorem{lemma}[equation]{Lemma}
\newtheorem{corollary}[equation]{Corollary}
\newtheorem{conjecture}[equation]{Conjecture}
\newtheorem*{conjecture-bab}{Conjecture BAB}
\newtheorem{question}[equation]{Question}
\newtheorem{proposition}[equation]{Proposition}
\newtheorem{lemma-definition}[equation]{Lemma-Definition}
\theoremstyle{definition}
\newtheorem{definition}[equation]{Definition}
\newtheorem*{definition*}{Definition}
\theoremstyle{remark}
\newtheorem{remark}[equation]{Remark}
\makeatletter\@addtoreset{equation}{section}
\makeatletter\@addtoreset{section}{part}
\newcommand{\OOO}{\mathscr{O}}
\def \G {\mathcal{G}}
\def \X {\mathcal{X}}
\def \AA {\mathcal{A}}
\def \BB {\mathcal{B}}
\def \EE {\mathcal{E}}
\def \P {\mathbb{P}}
\def \Q {\mathbb{Q}}
\def \C {\mathbb{C}}
\def \Z {\mathbb{Z}}
\def \K {\mathbb{K}}
\newcommand{\Bir}{\operatorname{Bir}}
\newcommand{\Aut}{\operatorname{Aut}}
\newcommand{\Alb}{\operatorname{Alb}}
\newcommand{\Cl}{\operatorname{Cl}}
\newcommand{\Pic}{\operatorname{Pic}}
\newcommand{\NS}{\operatorname{NS}}
\newcommand{\GL}{\operatorname{GL}}
\newcommand{\PGL}{\operatorname{PGL}}
\newcommand{\ord}{\operatorname{ord}}
\newcommand{\W}{\operatorname{W}}
\newcommand{\red}{\operatorname{red}}
\newcommand{\tors}{\operatorname{tors}}
\newcommand{\nun}{\operatorname{nu}}
\newcommand{\alg}{\operatorname{alg}}
\newcommand{\ab}{\operatorname{ab}}
\newcommand{\RC}{\operatorname{rc}}
\def \ge {\geqslant}
\def \le {\leqslant}
\title{Jordan property for groups of birational selfmaps}
\author{Yuri Prokhorov}
\author{Constantin Shramov}
\address{Steklov Institute of Mathematics,
8 Gubkina street, Moscow 119991, Russia}
\address{Laboratory of Algebraic Geometry, GU-HSE, 7 Vavilova street,
Moscow 117312, \mbox{Russia}}
\email{prokhoro@gmail.com}
\email{costya.shramov@gmail.com}
\thanks{
Both authors were partially supported by the grants
RFBR-11-01-00336-a, N.Sh.-5139.2012.1,
and AG Laboratory SU-HSE, RF~government
grant ag.~11.G34.31.0023.
The first author was partially supported by
Simons-IUM fellowship.
The second author was partially supported by the grants
\mbox{MK-6612.2012.1}, RFBR-11-01-00185-a and RFBR-12-01-33024.
}
\begin{document}

\begin{abstract}
Assuming a particular case of Borisov--Alexeev--Borisov conjecture,
we prove that finite subgroups of
the automorphism group of a finitely generated 
field over~$\Q$ have bounded orders.
Further, we investigate which algebraic varieties have
groups of birational selfmaps satisfying
the Jordan property.
\end{abstract}

\maketitle


Unless explicitly stated otherwise, all varieties are assumed to be
algebraic, geometrically irreducible and defined 
over an \emph{arbitrary} field
$\Bbbk$ of characteristic zero.

\section{Introduction}
\label{section:intro}
This paper is motivated by two questions of J.-P.\,Serre
(\cite{Serre2009}, \cite{Edinburgh-2010})
concerning the finite subgroups of automorphism
groups of fields of characteristic zero.

Our starting point is the following.

\begin{question}[{J.-P.\,Serre \cite{Edinburgh-2010}}]
\label{question:Serre-Q}
Let $K$ be a finitely generated field over~$\Q$.
Is it true that there is a constant $B=B(K)$ such that
any finite subgroup~\mbox{$G\subset\Aut(K)$} has order~\mbox{$|G|\le B$}?
\end{question}

We will refer to the property mentioned in Question~\ref{question:Serre-Q}
as \emph{boundedness of finite subgroups}.

\begin{definition}[{cf.~\cite[Definition~2.9]{Popov2011}}]
\label{definition:BFS}
Let $\G$ be a family of groups. We say that
$\G$ \emph{has uniformly bounded finite subgroups}
if there exists a constant $B=B(\G)$ such that
for any~\mbox{$\Gamma\in\G$} and for any finite subgroup
$G\subset\Gamma$ one has $|G|\le B$.
We say that a group $\Gamma$
\emph{has bounded finite subgroups}
if the family $\{\Gamma\}$ has uniformly bounded finite subgroups.
\end{definition}

To answer Question~\ref{question:Serre-Q} we will translate it to
geometrical language. In some of our arguments
we will rely on a particular case of the well-known
Borisov--Alexeev--Borisov conjecture (see~\cite{Borisov-1996}).

\begin{conjecture-bab}
\label{conjecture:BAB}
Let $\bar{\Bbbk}$ be an algebraically closed field. For a given
positive integer $n$, Fano varieties
of dimension $n$ with terminal singularities
defined over~$\bar{\Bbbk}$
are bounded, i.\,e. are contained in a finite
number of algebraic families.
\end{conjecture-bab}

\begin{remark}
Note that if Conjecture~BAB
holds in dimension
$n$, then it holds in any dimension $m\le n$.
\end{remark}

The first main result
of our paper is as follows.

\begin{theorem}\label{theorem:Q}
Suppose that $\Bbbk$ is a finitely generated field over $\Q$.
Let $X$ be a variety of dimension~$n$.
Suppose that Conjecture~BAB
holds in dimension $n$.
Then the group~\mbox{$\Bir(X)$} of birational automorphisms of $X$
over $\Bbbk$ has bounded finite subgroups.
\end{theorem}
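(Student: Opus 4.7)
The strategy is to reduce boundedness of finite subgroups of $\Bir(X)$ to that of $\Aut(F)$ for $F$ ranging over a bounded family of varieties, and then to deduce the latter from a classical Minkowski-style bound on finite subgroups of $\GL_N$ over finitely generated fields of characteristic zero.

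Given a finite subgroup $G\subset\Bir(X)$, I would first apply $G$-equivariant resolution of indeterminacies and singularities to obtain a smooth projective $G$-variety $Y$ birational to $X$ on which $G$ acts biregularly. Running the $G$-equivariant MMP on $Y$ then produces a normal projective $G$-variety $\tilde X$ with at worst terminal singularities, birational to $X$, which is either a minimal model (with $K_{\tilde X}$ nef) or admits a $G$-equivariant Mori fibration $\pi\colon\tilde X\to Z$ with $\dim Z<\dim\tilde X$. Since $\Bir(\tilde X)=\Bir(X)$ contains~$G$, it suffices to bound $|G|$ in terms of $\tilde X$ and the input data $(n,\Bbbk)$. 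Note that $\tilde X$ depends on $G$, but the family of possible Fano fibers produced below will be uniform in $G$ by Conjecture~BAB.

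I would then proceed by induction on $n=\dim X$. In the Mori-fibration case, a general fiber $F$ of $\pi$ is a Fano variety of dimension at most $n$ with terminal singularities, so by Conjecture~BAB (in dimension $\le n$) the possible~$F$ lie in a bounded family $\mathcal{F}_n$ depending only on~$n$. The $G$-equivariance of $\pi$ gives a short exact sequence $1\to K\to G\to G/K\to 1$ with $G/K\hookrightarrow\Bir(Z)$ and $K\hookrightarrow\Aut(F)$ acting on the generic fiber. The factor $|G/K|$ is bounded by the inductive hypothesis applied to $Z$, and the factor $|K|$ reduces to the following key claim: \emph{for any bounded family $\mathcal{F}$ of projective varieties and any finitely generated field $\Bbbk\supset\Q$, the orders of finite subgroups of $\Aut(F)(\Bbbk)$ for $F\in\mathcal{F}$ are uniformly bounded.} A uniform polarization of $\mathcal{F}$ embeds each $\Aut(F)$ into some $\PGL_N$ with $N=N(\mathcal{F})$, so this reduces to the classical fact that finite subgroups of $\GL_N(\Bbbk)$ have order bounded in terms of $N$ and $\Bbbk$, proved by reducing modulo a suitable maximal ideal of a finitely generated subring of $\Bbbk$ (chosen to avoid primes dividing the order of the given finite subgroup) and invoking the finiteness of $|\GL_N(\F_q)|$. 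The minimal-model case is handled by a simpler variant, where $\tilde X$ itself has an automorphism scheme that is a quasi-projective $\Bbbk$-group of bounded complexity.

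The main obstacle is running the $G$-equivariant MMP over the not necessarily algebraically closed field $\Bbbk$: the available MMP statements are usually geometric, and one must ensure that the output retains a biregular $G$-action defined over $\Bbbk$. A related subtlety is reconciling BAB as stated over $\bar{\Bbbk}$ with the arithmetic statement needed over $\Bbbk$, which requires careful tracking of both the geometry of $\mathcal{F}_n$ after base change and the descent of the $G$-action. Once these descent issues are handled, the induction together with the linear-group bound completes the argument.
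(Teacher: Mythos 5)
Your overall architecture for the uniruled part (equivariant MMP, induction on dimension, Conjecture~BAB to place the Fano fibers in a bounded family, an embedding into $\PGL_N$ and a Minkowski-type bound) matches the paper's Lemma~\ref{lemma:RC-over-Q-BFS}. But the dichotomy you start from is not available, and the branch you dismiss as ``a simpler variant'' is where the actual content of the theorem lives. If $K_Y$ is pseudo-effective, termination of the ($G$-equivariant) MMP, hence the existence of a minimal model, is open in dimension $\ge 4$; the paper circumvents this by first splitting off the maximal rationally connected fibration and then working with \emph{quasi-minimal} models of the non-uniruled base (\S\ref{section:quasi-minimal}), whose existence follows from \cite{BCHM} without termination. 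Worse, for non-uniruled $\tilde X$ the group $\Bir(\tilde X)$ is not linear and no bound on the ``complexity'' of its scheme structure can bound its finite subgroups: already for $X=A$ an abelian variety one has $A(\bar{\Bbbk})_{\tors}\subset\Bir(X)$, so the conclusion is \emph{false} over $\C$ and the hypothesis that $\Bbbk$ is finitely generated over $\Q$ must enter somewhere in this case. In the paper it enters through the chain of exact sequences of Proposition~\ref{proposition:technical}: the action on the Neron--Severi group is controlled by Corollary~\ref{corollary:lattice}, the action on a fixed algebraic equivalence class --- a torsor over the abelian variety $\Cl^0$ --- is controlled by the Mordell--Weil theorem (Corollary~\ref{corollary:abelian-variety-BFS}), and the residual group preserving an ample class on a quasi-minimal model is finite by Lemma~\ref{lemma-minimal-model}. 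None of this is in your sketch, and I do not see how to avoid it.

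There is also a uniformity gap in your inductive step. The group acting on the generic fiber of the Mori fibration lands in $\Aut(F)\subset\PGL_N(\K)$ for $F$ defined over $\K=\Bbbk(Z)$, and the Minkowski bound you invoke (reduction modulo a maximal ideal of a finitely generated subring) produces a constant depending on $\K$; but $\K$ depends on $Z$, hence on the finite subgroup $G$ you started with, so this does not by itself give a bound depending only on $X$ and $n$. One needs the Minkowski constant to be uniform over all finitely generated extensions $\K/\Bbbk$ in which $\Bbbk$ is algebraically closed; this is precisely Lemma~\ref{lemma:GL-uniform} of the paper, whose proof uses that such a $\K$ contains no roots of unity beyond those already in $\Bbbk$. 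This point is fixable, but it is a lemma to be proved, not a citation to a classical fact.
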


\begin{corollary}\label{corollary:Serre-OK}
The answer to Question~\ref{question:Serre-Q}
is positive modulo Conjecture~BAB
(cf.~Corollary~\ref{corollary:dim-3} below).
\end{corollary}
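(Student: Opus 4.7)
The plan is to translate Question~\ref{question:Serre-Q} into an assertion about $\Bir(X)$ for a suitable variety $X$ over a suitable base, and then invoke Theorem~\ref{theorem:Q}. Given $K$ finitely generated over $\Q$, set $\Bbbk$ to be the algebraic closure of $\Q$ inside $K$. Since $K/\Q$ is finitely generated, $\Bbbk$ is a number field, hence finitely generated over $\Q$, and the group $\Aut(\Bbbk)$ is finite. Because $\Bbbk$ is algebraically closed in $K$, one can realize $K \cong \Bbbk(X)$ for a geometrically irreducible variety $X$ over $\Bbbk$ whose dimension $n$ equals the transcendence degree of $K$ over $\Q$.

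The key observation is that $\Bbbk$ is intrinsic to $K$: it is precisely the subfield of elements algebraic over the prime field $\Q$, so every $\sigma \in \Aut(K)$ satisfies $\sigma(\Bbbk)=\Bbbk$. Restriction to $\Bbbk$ yields a homomorphism
$$
\rho\colon \Aut(K) \longrightarrow \Aut(\Bbbk),
$$
and under the identification $K\cong\Bbbk(X)$ its kernel $\ker\rho=\Aut_{\Bbbk}(K)$ coincides with $\Bir(X)$.

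Now assume Conjecture~BAB in dimension $n$. Applying Theorem~\ref{theorem:Q} to $X$ over the finitely generated field $\Bbbk$ produces a uniform bound $B$ on the orders of finite subgroups of $\Bir(X)$. For any finite subgroup $G\subset\Aut(K)$, the intersection $G\cap\ker\rho$ has index at most $|\Aut(\Bbbk)|$ in $G$, so $|G|\le |\Aut(\Bbbk)|\cdot B$, a quantity depending only on $K$; this answers Question~\ref{question:Serre-Q} positively. The only subtlety in the argument is the passage from automorphisms of $K$ as an abstract field to those fixing $\Bbbk$ pointwise, which is handled by the intrinsic characterization of $\Bbbk$ together with the finiteness of $\Aut(\Bbbk)$; all of the real work is already contained in Theorem~\ref{theorem:Q}.
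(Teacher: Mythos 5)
Your argument is correct and coincides with the paper's own proof: both pass to the algebraic closure $\Bbbk$ of $\Q$ in $K$, use that $\Aut(K)/\Aut_{\Bbbk}(K)$ embeds into the finite group $\Aut(\Bbbk)$, identify $\Aut_{\Bbbk}(K)$ with $\Bir(X)$ for a variety $X$ over $\Bbbk$ with function field $K$, and conclude by Theorem~\ref{theorem:Q}. No substantive differences.
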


Besides boundedness of finite subgroups,
there is a somewhat analogous property of groups
that has recently attracted attention of algebraic geometers.

\begin{definition}[{cf.~\cite[Definition~2.1]{Popov2011}}]
\label{definition:uniformly-Jordan}
Let $\G$ be a family of groups. We say that
$\G$ is \emph{uniformly Jordan}
if there is a constant $J=J(\G)$ such that
for any group $\Gamma\in\G$
and any finite subgroup $G\subset\Gamma$ there exists
a normal abelian subgroup $A\subset G$ of index at most $J$.
We say that a group $\Gamma$
is \emph{Jordan}
if the family $\{\Gamma\}$ is uniformly Jordan.
\end{definition}

The classically known examples of Jordan groups include
$\GL_m(\mathbb C)$,
pointed out by C.\,Jordan (see e.\,g.~\cite[Theorem 36.13]{Curtis1962}),
and thus all linear algebraic groups
over an arbitrary field of characteristic zero.
J.-P.\,Serre proved that the group of
birational automorphisms of the projective plane
$\P^2$ over a field of characteristic zero is Jordan
(see \cite[Theorem~5.3]{Serre2009}),
and asked if the same holds for groups of
birational automorphisms of projective spaces $\P^n$ for $n\ge 3$
(see \cite[6.1]{Serre2009}).
Recently the authors were able to establish the following
theorem that deals with the case of rationally
connected varieties (see e.\,g.~\mbox{\cite[IV.3.2]{Kollar-1996-RC}})
of arbitrary dimension, and in particular answers the latter question.

\begin{theorem}[{\cite[Theorem~1.8]{Prokhorov-Shramov}}]
\label{theorem:RC-Jordan}
Let $\G_{\RC}(n)$ be the family of groups $\Bir(X)$, where~$X$
varies in the set
of rationally connected varieties of dimension $n$.
Assume that Conjecture~BAB
holds in dimension $n$.
Then the family $\G_{\RC}(n)$ is uniformly Jordan.
\end{theorem}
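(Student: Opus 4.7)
The plan is to argue by induction on $n = \dim X$, the case $n = 0$ being trivial. Given a rationally connected $X$ of dimension $n$ and a finite subgroup $G \subset \Bir(X)$, the first step is to pass to a good $G$-equivariant model. Using $G$-equivariant resolution followed by a $G$-equivariant run of the minimal model program, I would produce a normal projective variety $Y$ birational to $X$, with $\Q$-factorial terminal singularities, on which $G$ acts biregularly, together with a $G$-equivariant Mori fiber space structure $\pi\colon Y \to Z$. Since rational connectedness is birational and descends under surjective morphisms with rationally connected generic fiber, both $Y$ and $Z$ are rationally connected.

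If $\dim Z = 0$, then $Y$ is itself a Fano variety of dimension $n$ with terminal singularities. Conjecture~BAB in dimension $n$ implies that, after base change to $\bar\Bbbk$, such varieties form a bounded family. Hence there exist integers $m = m(n)$ and $N = N(n)$ such that $-mK_Y$ is very ample and $h^0(Y, -mK_Y) \le N$. Because the $G$-action preserves $K_Y$, this gives a $G$-equivariant embedding $Y \hookrightarrow \P^{N-1}_{\bar\Bbbk}$ and hence an injection $G \hookrightarrow \PGL_N(\bar\Bbbk)$. The classical Jordan theorem for $\GL_N$ then provides a normal abelian subgroup of $G$ of index bounded purely in terms of $n$.

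If $\dim Z \ge 1$, let $G_Z$ denote the image of the induced homomorphism $G \to \Bir(Z)$ and $G_\eta$ its kernel, so that $G_\eta$ acts faithfully on the generic fiber $Y_\eta$, a Fano variety over $\Bbbk(Z)$ of dimension $n - \dim Z$. Applying the induction hypothesis to $Z$, which is rationally connected of smaller dimension, bounds the Jordan constant of $G_Z$. Applying the argument of the previous paragraph to $Y_\eta$ (of smaller dimension, but over the non-closed field $\Bbbk(Z)$) bounds the Jordan constant of $G_\eta$.

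The main obstacle is the final step: extensions of Jordan groups need not themselves be Jordan, so one cannot simply combine the bounds for $G_Z$ and $G_\eta$. To close the argument I would complement the Jordan bound for $G_\eta$ by an additional uniform bound, again a consequence of Conjecture~BAB via the embedding into $\PGL_N$, on the minimal number of generators of finite abelian subgroups of $\Aut(F)$ for terminal Fano fibers $F$. With such a rank bound in hand, a standard group-theoretic argument, replacing the Jordan abelian subgroup of $G_\eta$ by a characteristic subgroup stabilized by a bounded-index subgroup of $G$ and then taking an intersection with the preimage of the abelian part of $G_Z$, yields a normal abelian subgroup of $G$ of index bounded solely in terms of $n$, completing the induction.
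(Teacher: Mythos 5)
First, note that this paper does not actually prove Theorem~\ref{theorem:RC-Jordan}: it is imported from \cite[Theorem~1.8]{Prokhorov-Shramov}, and the closest argument present in this text is Lemma~\ref{lemma:RC-over-Q-BFS}, which runs exactly your reduction (regularization, $G$-equivariant MMP to a $G$-Mori fiber space, Conjecture~BAB plus the anticanonical embedding into $\PGL_N$ in the Fano case, induction on the base and on the generic fiber), but only for the much easier property of \emph{bounded finite subgroups}, where the gluing step is the trivial Lemma~\ref{lemma:BFS-by-BFS}. Up to and including your two cases $\dim Z=0$ and $\dim Z\ge 1$, the outline is correct and coincides with the strategy of the cited reference.

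The gap is in your last paragraph, and it is exactly the hard point of the theorem. The assertion you ultimately need --- that an extension of a Jordan group with finite subgroups of bounded rank (the fiberwise part $G_\eta$) by a Jordan group with finite subgroups of bounded rank (the base part $G_Z$) is uniformly Jordan --- is false as a statement of pure group theory: for fixed $r\ge 1$ and varying primes $p$, the Heisenberg groups fitting into $1\to\Z/p\Z\to H\to\big(\Z/p\Z\big)^{2r}\to 1$ have every abelian subgroup of index at least $p^{r}$, although both kernel and quotient are abelian of rank at most $2r$. This is the phenomenon recorded in Remark~\ref{remark:oops}, and the only available extension lemma, Lemma~\ref{lemma:Jordan-by-BFS-with-BR}, requires the kernel to have bounded \emph{order}, not merely bounded rank. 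Your proposed repair does not close the gap as stated: a normal abelian subgroup of bounded index in $G_\eta$ need not be characteristic, and the size of its conjugation orbit under $G$ is controlled only by $[G:G_\eta]=|G_Z|$, which is unbounded, so intersecting conjugates destroys the index bound; and even if one could produce a $G$-invariant abelian subgroup of bounded index in $G_\eta$, one would still face an extension of two bounded-rank abelian groups, which the Heisenberg example shows is not uniformly Jordan. Eliminating such configurations is precisely where \cite{Prokhorov-Shramov} has to invest additional geometric input (fixed-point results for group actions on bounded Fano varieties, the analysis of $p$-subgroups for large $p$, and a reduction to a situation where the bounded-kernel Lemma~\ref{lemma:Jordan-by-BFS-with-BR} applies); it cannot be done by formal manipulation of the exact sequence alone.
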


Yu.\,G.\,Zarhin found an example of a surface $X$ such that
the group~\mbox{$\Bir(X)$} is not Jordan (see~\cite{Zarhin10}), and
V.\,L.\,Popov classified all surfaces $X$ such that~\mbox{$\Bir(X)$} is Jordan
(see~\cite[Theorem~2.32]{Popov2011}).
The next natural step may be to wonder if it is possible to do
something similar in higher dimensions.
The second main result of our current paper is the following
theorem that completely solves
the question for
non-uniruled varieties (see~\cite[\S IV.1.1]{Kollar-1996-RC})
and partially describes the general case.
Recall that \emph{irregularity} of 
a variety~$X$ is defined as~\mbox{$q(X)=\dim H^1(X',\OOO_{X'})$}, where~$X'$ 
is any smooth projective variety birational to~$X$.

\begin{theorem}\label{theorem:main}
Let $X$ be a variety of dimension $n$.
Then the following assertions hold.
\begin{itemize}
\item[(i)]
The group $\Bir(X)$ has bounded
finite subgroups provided that $X$ is
non-uniruled and has irregularity
$q(X)=0$.
\item[(ii)]
The group $\Bir(X)$ is Jordan
provided that $X$ is non-uniruled.
\item[(iii)]
Suppose that Conjecture~BAB
holds in dimension $n$. Then the group $\Bir(X)$ is Jordan
provided that $X$ has irregularity
$q(X)=0$.
\end{itemize}
\end{theorem}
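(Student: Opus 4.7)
The three assertions are linked in the order (i)$\Rightarrow$(ii)$\Rightarrow$(iii): part~(i) serves as the base case; part~(ii) reduces to~(i) via the Albanese morphism; and part~(iii) reduces to~(i) via the maximal rationally connected fibration combined with Theorem~\ref{theorem:RC-Jordan}.

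For~(iii), I would realize the MRC fibration of $X$ by a morphism $\tilde X\to Y$ of smooth projective models. The base $Y$ is non-uniruled by maximality of the MRC fibration, satisfies $q(Y)=q(X)=0$ since rationally connected fibers contribute no nontrivial $1$-forms, and has $\dim Y\le n$, so part~(i) applied to $Y$ yields a uniform bound $B=B(Y)$ on orders of finite subgroups of $\Bir(Y)$. For any finite $G\subset\Bir(X)$ the induced action on the base gives an exact sequence $1\to K\to G\to H\to 1$ with $H\subset\Bir(Y)$ and $K$ acting on the geometric generic fiber, which is rationally connected of dimension at most $n$ over $\Bbbk(Y)$. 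Theorem~\ref{theorem:RC-Jordan} produces a normal abelian subgroup of $K$ of index at most $J(n)$; intersecting its $G$-conjugates inside $K$ yields a characteristic abelian subgroup of $K$, normal in $G$, of index bounded in terms of $n$ and $B$, which gives the Jordan property for~$\Bir(X)$.

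For~(ii), if $q(X)=0$ part~(i) already gives the stronger conclusion, so assume $q(X)>0$. Fix a smooth projective model $X'$ with Albanese morphism $a\colon X'\to A=\Alb(X')$. Every birational selfmap of $X'$ induces a biregular selfmap of $A$, yielding a homomorphism $\alpha\colon\Bir(X)\to\Aut(A)$. The target decomposes as $\Aut(A)=A(\bar{\Bbbk})\rtimes\Aut_{\mathrm{gr}}(A)$; since $\Aut_{\mathrm{gr}}(A)$ sits in the unit group of the finitely generated ring $\operatorname{End}(A)$, its torsion is bounded by a Minkowski-type argument, so $\Aut(A)$ is Jordan. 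Meanwhile $\ker\alpha$ acts on the generic fiber $F$ of $a$, which is non-uniruled (any rational curve in $X'$ contracts to a point of $A$) and has $q(F)=0$ (standard for fibers of the Albanese). Part~(i) applied to $F$ over $\Bbbk(A)$ gives bounded finite subgroups of $\ker\alpha$, and a standard extension lemma combining a Jordan quotient with a normal subgroup of bounded finite subgroups then yields the Jordan property of $\Bir(X)$.

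For~(i), I would induct on $n$. The case $n=1$ is vacuous, since non-uniruled together with $q=0$ forces $X\cong\P^1$, which is uniruled. For $n\ge 2$ and $\kappa(X)=n$ the variety is of general type and $\Bir(X)$ is finite. For $0\le\kappa(X)<n$, the Iitaka fibration $X'\dashrightarrow Y$ has base of general type (so $\Bir(Y)$ finite) and generic fiber of Kodaira dimension zero and dimension strictly less than $n$; an extension argument together with the inductive hypothesis on the fiber handles this case. The remaining case $\kappa(X)=0$ uses the existence (via BCHM) of a birational model with numerically trivial canonical class, on which $\Bir(X)$ acts by pseudo-automorphisms; the representation on $\NS$ has finite kernel (here $q(X)=0$ rules out a positive-dimensional $\Aut^0$), and finite subgroups of the orthogonal group of a lattice have uniformly bounded order. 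The main obstacle is the hypothetical case of a non-uniruled variety with $\kappa(X)=-\infty$, excluded only by the non-vanishing conjecture; handling it either requires restricting to dimensions where non-vanishing is known or supplying a substitute in the $q=0$ setting. A secondary technical point is that the extension lemmas used in~(ii) and~(iii) must be carefully packaged so as to produce a genuinely \emph{normal} abelian subgroup of the full finite subgroup $G$, rather than merely of a finite-index subgroup of~$G$.
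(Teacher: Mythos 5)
Your overall architecture (MRC fibration for (iii), Albanese for (ii), a base case (i)) is reasonable in outline, but the proof does not close, and the most serious gaps are exactly at the points where the paper's argument is genuinely different. The central problem is part (i), on which your (ii) and (iii) both rest. Your Kodaira-dimension case analysis leaves two cases open: the non-uniruled $\kappa(X)=-\infty$ case (which you acknowledge requires the non-vanishing conjecture), and, less visibly, the $\kappa(X)=0$ case, because \cite{BCHM} does not produce minimal models (let alone models with numerically trivial canonical class) for varieties that are merely non-uniruled --- termination in that regime is precisely the open part of the MMP. Also, the base of the Iitaka fibration need not be of general type (an elliptic surface with $\kappa=1$ over $\P^1$ has Iitaka base $\P^1$), so ``$\Bir(Y)$ finite'' needs a different justification there. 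The paper avoids Kodaira dimension entirely: it runs an MMP with scaling to produce a \emph{quasi-minimal model} (Lemma~\ref{lemma-quasi-minimal-model}), which exists unconditionally for non-uniruled varieties even if the MMP does not terminate; birational selfmaps of such a model are isomorphisms in codimension one (Corollary~\ref{corollary:2-qm}), hence act on $\NS^{\W}$ with image of bounded order by Minkowski (Corollary~\ref{corollary:lattice}), the kernel of that action acts on a torsor over $\Cl^0$, which is trivial when $q(X)=0$, and the stabilizer of an ample class is finite (Lemma~\ref{lemma-minimal-model}). This yields (i) with no unproven input.

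In part (ii) there are two further gaps. First, the generic fiber of the Albanese map need \emph{not} have $q=0$: for a non-isotrivial elliptic surface $X\to B$ over a curve $B$ of genus at least $2$ one has $q(X)=g(B)$, the Albanese map is the elliptic fibration, and its fibers have $q=1$; so your appeal to part (i) on the fiber fails and the induction does not close. (The paper instead keeps the whole non-uniruled variety and extracts the abelian-variety contribution from $\Cl^0$ of a quasi-minimal model, not from Albanese fibers.) Second, the ``standard extension lemma'' you invoke --- normal subgroup with bounded finite subgroups plus Jordan quotient implies Jordan --- is false in general; see Remark~\ref{remark:oops}. This is exactly why the paper needs the bounded-rank hypothesis on $\Aut_{g}(A)$ (Corollary~\ref{corollary:abelian-variety-Jordan}) together with Klyachko's Lemma~\ref{lemma:Jordan-by-BFS-with-BR}; the hypothesis is available here, but it must be invoked, and your closing remark about normality does not capture this obstruction. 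Your part (iii), by contrast, uses the extension lemma in the valid direction (Jordan kernel, bounded quotient, Lemma~\ref{lemma:Jordan-by-BFS}) and is essentially the paper's argument once (i) is in place.
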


Note that Conjecture~BAB
is proved in dimension $n\le 3$ (see~\cite{KMMT-2000}).
Therefore, one has the following.

\begin{corollary}[{cf.~\cite[Corollary~1.9]{Prokhorov-Shramov}}]
\label{corollary:dim-3}
Theorems~\ref{theorem:Q} and~\ref{theorem:main}
(as well as Theorem~\ref{theorem:RC-Jordan})
hold in dimension $n\le 3$
without any additional assumptions.
\end{corollary}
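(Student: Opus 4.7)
The plan is essentially to observe that the only additional hypothesis used in Theorem~\ref{theorem:Q}, in Theorem~\ref{theorem:main}(iii), and in Theorem~\ref{theorem:RC-Jordan} of \cite{Prokhorov-Shramov}, beyond the intrinsic geometric assumptions on $X$, is Conjecture~BAB in the appropriate dimension. The corollary will therefore follow at once from an unconditional boundedness result for Fano varieties in low dimension.

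Concretely, I would invoke the theorem of Kawamata--Matsuki--Miyaoka--Takagi \cite{KMMT-2000}, which establishes boundedness of Fano threefolds with terminal singularities (the cases of dimensions one and two being classical). By the remark immediately following the statement of Conjecture~BAB, this yields Conjecture~BAB in every dimension $n\le 3$. Substituting this into Theorem~\ref{theorem:Q} gives boundedness of finite subgroups of~$\Bir(X)$ for any variety~$X$ of dimension at most three defined over a finitely generated field over~$\Q$. Substituting it into Theorem~\ref{theorem:main}(iii) handles the $q(X)=0$ assertion in dimension $\le 3$; parts~(i) and~(ii) of Theorem~\ref{theorem:main} carry no BAB hypothesis and so apply already in any dimension, in particular for $n\le 3$. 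Likewise, the same substitution in Theorem~\ref{theorem:RC-Jordan} gives that the family $\G_{\RC}(n)$ is uniformly Jordan for all $n\le 3$.

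There is no genuine obstacle at this stage: the corollary is a purely formal combination of the main theorems of the paper (and of \cite{Prokhorov-Shramov}) with the unconditional input of \cite{KMMT-2000}. All of the difficulty is hidden in the cited work of Kawamata--Matsuki--Miyaoka--Takagi, whose proof of Conjecture~BAB in dimension three relies on a detailed analysis of the minimal model program in dimension three and on the classification of terminal Fano threefolds.
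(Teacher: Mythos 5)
Your argument is correct and is exactly the paper's own: the authors simply note that Conjecture~BAB is known unconditionally in dimension $n\le 3$ by~\cite{KMMT-2000} and then read off the corollary from Theorems~\ref{theorem:Q}, \ref{theorem:main} and~\ref{theorem:RC-Jordan}. One small slip: the reference \cite{KMMT-2000} is due to Koll\'ar--Miyaoka--Mori--Takagi, not Kawamata--Matsuki--Miyaoka--Takagi.
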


\begin{remark}\label{remark:Zarhin}
Yu.\,G.\,Zarhin showed in~\cite{Zarhin10} that the group of birational selfmaps
of a variety that is isomorphic to a product
of an abelian variety and a projective line over an algebraically
closed field of characteristic zero
violates the Jordan property.
This shows that one cannot remove the conditions of non-uniruledness and
vanishing irregularity from Theorem~\ref{theorem:main}(ii),(iii).
Moreover, by~\cite[Theorem 2.32]{Popov2011}
the only surface $X$ over an algebraically
closed field of characteristic zero such that
$\Bir(X)$ fails to have the Jordan property is the product $E\times\P^1$,
where $E$ is an elliptic curve. Thus (to a certain extent) we may consider
Theorem~\ref{theorem:main}(ii),(iii) to be
a generalization of~\cite[Theorem 2.32]{Popov2011}.
\end{remark}

Besides the results listed above, in~\S\ref{section-Solvably-Jordan-groups} 
we discuss
a ``solvable'' analog of the Jordan property.
The main result there is
Proposition~\ref{proposition:solvably-Jordan}
which answers
Question~\ref{question:solvable} asked by D.\,Allcock.

\begin{remark}
Note that the group $\Bir(X)$ of birational automorphisms
of a variety $X$ over $\Bbbk$ has a structure of a $\Bbbk$-scheme,
although in general it is not a group scheme, and it is not a birational 
invariant of $X$ (see~\cite[\S1]{Hanamura1988}). 
However, both of these properties hold in an important particular
case when $X$ is a minimal model (see~\cite[Theorem~3.3(1)]{Hanamura1987}
and~\cite[Theorem~3.7(2)]{Hanamura1987}).
If $X$ is non-uniruled, then the structure of $\Bir(X)$ 
is much better understood than in the general case.
In particular, it is known that if $X$ is non-uniruled, then
the dimension of $\Bir(X)$ is at most $q(X)$ 
(see~\cite[Theorem~2.1(i)]{Hanamura1988}).
Moreover, there is an interesting conjecture that may be 
relevant to Theorem~\ref{theorem:main}(i): if~$X$ is non-uniruled
and satisfies some additional assumptions, then 
the ``discrete part'' $\Bir(X)_{\red}/\Bir(X)^0$ is finitely generated
(see~\cite[\S7.4]{Hanamura1988}).
Since the general structure of $\Bir(X)$ is not a subject of this paper,
we refer the reader to~\cite{Hanamura1987}, \cite{Hanamura1988} 
and references therein for further information.
\end{remark}

\smallskip
The plan of the paper is as follows. In~\S\ref{section:basics}
we discuss the basic (group-theoretical) properties of Jordan groups
and groups with bounded finite subgroups, and also collect some important
examples of groups of each of these two classes.
In~\S\ref{section:groups-divisors} we recall some well-known 
auxiliary geometrical facts.
In~\S\ref{section:quasi-minimal} we introduce (following a suggestion of
Caucher Birkar) quasi-minimal models
that are analogs of minimal models such that
one does not need the full strength of
the Minimal Model Program to prove their existence.
In~\S\ref{section:minimal-models} we discuss the action of finite
groups on quasi-minimal models. In~\S\ref{section:proofs} we prove
Proposition~\ref{proposition:technical}, which is our main auxiliary
result describing the general structure of finite groups
of birational automorphisms, and use it to derive
Theorem~\ref{theorem:main}. In~\S\ref{section:Q}
we prove Theorem~\ref{theorem:Q} and derive
Corollary~\ref{corollary:Serre-OK}. In~\S\ref{section-Solvably-Jordan-groups}
we discuss solvably Jordan groups. Finally, in~\S\ref{section:discussion}
we discuss some open questions related to the subject of this paper.

\smallskip
\textbf{Acknowledgements.}
We would like to thank J.-P.\,Serre who asked us questions considered here.
We are grateful to P.\,Cascini, F.\,Catanese, I.\,A.\,Cheltsov,
S.\,O.\,Gorchinskiy, A.\,N.\,Parshin,
V.\,L.\,Popov and Yu.\,G.\,Zarhin for useful discussions.
Special thanks go to
C.\,Birkar who explained us the concepts of~\S\ref{section:quasi-minimal}, to
A.\,A.\,Klyachko who explained us the proof of
Lemma~\ref{lemma:Jordan-by-BFS-with-BR}, to De-Qi Zhang who pointed out that 
the first version of our proof of 
Lemma~\ref{lemma:solvably-Jordan-by-solvably-Jordan}
was insufficient, and to L.\,Pyber who explained us how to complete it.
We also thank the referee who provided many valuable comments on the 
paper. This work was mostly completed while the authors were visiting 
National Center for 
Theoretical Sciences~\mbox{(NCTS/TPE)} and National Taiwan University.
We are grateful to these institutions and personally to
Jungkai Chen for invitation and hospitality.

\section{Basic properties}
\label{section:basics}

\begin{remark}\label{remark:BFS-implies-Jordan}
If a family $\G$ has uniformly bounded finite subgroups,
then it is uniformly Jordan.
\end{remark}

\begin{lemma}
\label{lemma:BFS-by-BFS}
Let $\G_1$ and $\G_2$ be families of groups with uniformly bounded
finite subgroups.
Let $\G$ be a family of groups $G$ such that there exists
an exact sequence
$$1\longrightarrow G_1\longrightarrow G\longrightarrow G_2\longrightarrow 1,$$
where $G_1\in\G_1$ and $G_2\in\G_2$.
Then $\G$ has uniformly bounded finite subgroups.
\end{lemma}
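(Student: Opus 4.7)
The plan is to simply push the uniform bounds through the given exact sequence. Let $B_1=B(\G_1)$ and $B_2=B(\G_2)$ be the uniform bounds supplied by the hypothesis. Given any $G\in\G$ with exact sequence
\[
1\longrightarrow G_1\longrightarrow G\stackrel{\pi}{\longrightarrow} G_2\longrightarrow 1,
\]
and any finite subgroup $H\subset G$, I would split $|H|$ into the size of its kernel and image under $\pi$.

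First, observe that $H\cap G_1=\ker(\pi|_H)$ is a finite subgroup of $G_1\in\G_1$, so $|H\cap G_1|\le B_1$. Second, $\pi(H)$ is a finite subgroup of $G_2\in\G_2$, so $|\pi(H)|\le B_2$. Applying the first isomorphism theorem to $\pi|_H\colon H\to G_2$ gives
\[
|H|=|H\cap G_1|\cdot|\pi(H)|\le B_1 B_2.
\]
Hence $B:=B_1 B_2$ is a uniform bound for finite subgroups of groups in $\G$, so $\G$ has uniformly bounded finite subgroups.

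There is really no obstacle here; the proof is a one-line application of the definition together with the elementary fact that a finite group sitting in a short exact sequence has order equal to the product of the orders of its intersection with the normal subgroup and its image in the quotient. The only mild subtlety is to remember that the hypothesis gives a bound for \emph{every} finite subgroup of each member of $\G_1$ and $\G_2$, which is exactly what is needed to bound $|H\cap G_1|$ and $|\pi(H)|$ uniformly as $G$ varies over $\G$.
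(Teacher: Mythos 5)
Your argument is correct and is precisely the ``straightforward'' verification the paper omits: intersecting a finite subgroup $H\subset G$ with the kernel and projecting to the quotient bounds $|H|$ by $B_1B_2$. Nothing further is needed.
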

\begin{proof}
Straightforward.
\end{proof}

\begin{lemma}
\label{lemma:Jordan-by-BFS}
Let $\G_1$ and $\G_2$ be families of groups
such that $\G_1$ is uniformly Jordan and~$\G_2$ has uniformly bounded
finite subgroups.
Let $\G$ be a family of groups $G$ such that there exists
an exact sequence
$$1\longrightarrow G_1\longrightarrow G\longrightarrow G_2\longrightarrow 1,$$
where $G_1\in\G_1$ and $G_2\in\G_2$.
Then $\G$ is uniformly Jordan.
\end{lemma}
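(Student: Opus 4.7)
The plan is to argue by a standard intersection-of-conjugates trick. Let $J=J(\G_1)$ be the Jordan constant of the family $\G_1$ and $B=B(\G_2)$ be the bound on orders of finite subgroups of groups in $\G_2$. Fix $G\in\G$ sitting in the given extension and let $H\subset G$ be an arbitrary finite subgroup; we will produce a normal abelian subgroup $A\subset H$ of index bounded in terms of $J$ and $B$ only.

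First I would pass to the induced extension on $H$. Setting $H_1 = H \cap G_1$ and letting $H_2$ denote the image of $H$ in $G_2$, we obtain an exact sequence
$$1\longrightarrow H_1\longrightarrow H\longrightarrow H_2\longrightarrow 1.$$
Here $H_2$ is a finite subgroup of $G_2\in\G_2$, so $|H_2|\le B$, whence $[H:H_1]\le B$. Also $H_1$ is a finite subgroup of $G_1\in\G_1$, so by uniform Jordanness there exists a normal abelian subgroup $A_1\subset H_1$ with $[H_1:A_1]\le J$.

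The subgroup $A_1$ is normal in $H_1$ but in general not in $H$. To remedy this, I would set
$$A \;=\; \bigcap_{h\in H} h A_1 h^{-1}.$$
Since $H_1$ is normal in $H$ (being the kernel of $H\to H_2$), every conjugate $hA_1h^{-1}$ is a subgroup of $H_1$ of index at most $J$; and because $H_1\subset N_H(A_1)$, the number of distinct conjugates of $A_1$ in $H$ is at most $[H:H_1]\le B$. Hence $A$ is an intersection of at most $B$ subgroups of $H_1$, each of index at most $J$, so $[H_1:A]\le J^{B}$, and therefore
$$[H:A]\;\le\;[H:H_1]\cdot[H_1:A]\;\le\;B\cdot J^{B}.$$
By construction $A$ is normal in $H$, and it is abelian as a subgroup of the abelian group $A_1$. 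Taking $J(\G)=B\cdot J^{B}$ yields the required uniform Jordan constant for $\G$.

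The only subtle point, and the one I would emphasize, is the passage from ``normal in $H_1$'' to ``normal in $H$'' via intersecting conjugates; the rest reduces to elementary index estimates. No further input beyond the definitions and the normality of $H_1$ in $H$ is needed.
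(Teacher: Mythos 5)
Your proof is correct, and it is essentially the standard argument for this statement: the paper itself gives no direct proof but simply cites \cite[Lemma~2.11]{Popov2011}, whose proof proceeds exactly as you do, by intersecting with the kernel, applying the Jordan property there, and then replacing the resulting abelian subgroup by its normal core in $H$ (an intersection of at most $B$ conjugates, each of index at most $J$ in $H_1$, giving the bound $B\cdot J^{B}$). All the individual steps --- the exactness of the induced sequence on $H$, the count of conjugates via $H_1\subset N_H(A_1)$, and the Poincar\'e-type index estimate for the intersection --- check out.
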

\begin{proof}
See \cite[Lemma~2.11]{Popov2011}.
\end{proof}

\begin{remark}[{cf. \cite[Remark~2.12]{Popov2011}}]
\label{remark:oops}
If $\G_1$ and $\G_2$ are families of groups
such that $\G_1$ has uniformly bounded
finite subgroups and $\G_2$ is uniformly Jordan, then
a family $\G$ of groups $G$ such that there exists
an exact sequence
$$1\longrightarrow G_1\longrightarrow G\longrightarrow G_2\longrightarrow 1$$
with $G_1\in\G_1$ and $G_2\in\G_2$
may fail to be uniformly Jordan.
For example, this is the case
if $\G_1$ consists of a single group $\Z/p\Z$, where $p$ is a prime,
and $\G_2$ consists of groups of the form $\big(\Z/p\Z\big)^{2r}$
for various $r$.
\end{remark}

For applications in~\S\ref{section:proofs}
we would like to know some additional condition that would
guarantee that the extensions considered in Remark~\ref{remark:oops}
form a uniformly Jordan family (cf.~\cite[Corollary~2.13]{Popov2011}).
One of such conditions relies on the following auxiliary
definition.

\begin{definition}
\label{definition:bounded-rank}
Let $\G$ be a family of groups. We say that
$\G$ \emph{has finite subgroups of uniformly bounded rank}
if there exists a constant $R=R(\G)$ such that
for any $\Gamma\in\G$ each finite abelian subgroup
$A\subset\Gamma$ is generated by at most $R$ elements.
We say that a group~$\Gamma$
\emph{has finite subgroups of bounded rank}
if the family $\{\Gamma\}$ has
finite subgroups of uniformly bounded rank.
\end{definition}

\begin{remark}\label{remark:stupid}
If a family $\G$ has uniformly bounded finite subgroups,
then it has
finite subgroups of uniformly bounded rank.
\end{remark}

\begin{lemma}
\label{lemma:BR-by-BR}
Let $\G_1$ and $\G_2$ be families of groups
with finite subgroups of uniformly bounded rank.
Let $\G$ be a family of groups $G$ such that there exists
an exact sequence
$$1\longrightarrow G_1\longrightarrow G\longrightarrow G_2\longrightarrow 1,$$
where $G_1\in\G_1$ and $G_2\in\G_2$.
Then $\G$
has finite subgroups of uniformly bounded rank.
\end{lemma}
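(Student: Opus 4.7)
The plan is to mimic the standard argument for Lemma~\ref{lemma:BFS-by-BFS}, but tracking the minimal number of generators of a finite abelian subgroup rather than its order. Let $R_1=R(\G_1)$ and $R_2=R(\G_2)$ be the constants provided by the hypothesis, and set $R=R_1+R_2$.

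Take any $G\in\G$, fit it into an exact sequence $1\to G_1\to G\to G_2\to 1$ with $G_i\in\G_i$, and let $A\subset G$ be a finite abelian subgroup. Consider $A_1=A\cap G_1$ and $A_2$, the image of $A$ in $G_2$. Both are finite abelian: $A_1$ is a subgroup of $G_1$, so by the assumption on $\G_1$ it is generated by at most $R_1$ elements; $A_2$ is a subgroup of $G_2$, so by the assumption on $\G_2$ it is generated by at most $R_2$ elements.

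Now $A$ sits in a short exact sequence $1\to A_1\to A\to A_2\to 1$. Pick preimages in $A$ of a minimal set of generators of $A_2$; together with a minimal set of generators of $A_1$ they generate $A$. Hence $A$ is generated by at most $R_1+R_2=R$ elements, so $\G$ has finite subgroups of uniformly bounded rank with constant $R$. There is no real obstacle here — the only thing to notice is that one must not ask for $A_2$ to be a subgroup of $G$; it is enough that it embeds in $G_2$, which is automatic for the quotient $A/A_1$.
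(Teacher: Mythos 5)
Your argument is correct and is exactly the standard one the paper has in mind (its proof is simply ``Straightforward''): intersect $A$ with $G_1$ to get at most $R_1$ generators, project to $G_2$ to get at most $R_2$ more, and lift. Nothing further is needed.
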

\begin{proof}
Straightforward.
\end{proof}

An advantage of Definition~\ref{definition:bounded-rank} is
the following property explained to us by Anton Klyachko.

\begin{lemma}
\label{lemma:Jordan-by-BFS-with-BR}
Let $\G_1$ and $\G_2$ be families of groups
such that $\G_1$ has uniformly bounded
finite subgroups and $\G_2$ is uniformly Jordan and has
finite subgroups of uniformly bounded rank.
Let $\G$ be a family of groups $G$ such that there exists
an exact sequence
$$1\longrightarrow G_1\longrightarrow G\longrightarrow G_2\longrightarrow 1,$$
where $G_1\in\G_1$ and $G_2\in\G_2$.
Then $\G$ is uniformly Jordan.
\end{lemma}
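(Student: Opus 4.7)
The plan is to follow the strategy of Lemma~\ref{lemma:Jordan-by-BFS} but refine each step so that the bounded rank hypothesis on~$\G_2$ takes care of the obstruction exhibited in Remark~\ref{remark:oops}. Write $B_1$, $J_2$, $R_2$ for the constants associated with the three hypotheses. Let $F\subset G$ be an arbitrary finite subgroup, set $F_1=F\cap G_1$ and $F_2=F/F_1$, so that $|F_1|\le B_1$ and $F_2$ embeds into $G_2$. Applying the uniform Jordan property to $F_2$ gives a normal abelian subgroup $A_2\subset F_2$ with $[F_2:A_2]\le J_2$, and the bounded rank hypothesis ensures $A_2$ is generated by at most $R_2$ elements. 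Let $F'\subset F$ be the preimage of $A_2$; it is a normal subgroup of $F$ of index at most $J_2$, and $F'/F_1\cong A_2$.

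Next I would pass to the kernel of the conjugation action on $F_1$. The subgroup $C_F(F_1)$ is normal in $F$ (since $F_1$ is), and has index at most $|\Aut(F_1)|\le B_1!$. Put $H=F'\cap C_F(F_1)$, a normal subgroup of $F$ of index at most $J_2\cdot B_1!$. By construction $F_1\subseteq Z(H)$, and $H/F_1$ embeds into $A_2$, so $H/F_1$ is abelian and generated by at most $R_2$ elements. Consequently $H$ is nilpotent of class at most~$2$, and the commutator subgroup $[H,H]$ is contained in $F_1$, so it has order at most $B_1$.

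The decisive step, and the only place where bounded rank plays an essential role, is to bound $|H/Z(H)|$. In a nilpotent group of class two the commutator map descends to a well-defined, bilinear and alternating pairing
$$[\,\cdot\,,\,\cdot\,]\colon H/Z(H)\times H/Z(H)\longrightarrow [H,H],$$
which is non-degenerate by the definition of the center. Since $H/Z(H)$ is a quotient of $H/F_1$, it is abelian and generated by at most $R_2$ elements; hence
$$\bigl|\operatorname{Hom}\bigl(H/Z(H),[H,H]\bigr)\bigr|\le |[H,H]|^{R_2}\le B_1^{R_2},$$
and non-degeneracy of the pairing gives $|H/Z(H)|\le B_1^{R_2}$. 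The subgroup $Z(H)$ is characteristic in $H$ and $H$ is normal in $F$, so $Z(H)$ is a normal abelian subgroup of $F$ with
$$[F:Z(H)]\le J_2\cdot B_1!\cdot B_1^{R_2},$$
a bound depending only on $\G_1$ and $\G_2$. This yields the uniform Jordan constant for $\G$.

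The main obstacle, and the reason the bounded rank assumption cannot be dropped, is precisely the passage from a bound on $|[H,H]|$ to a bound on $|H/Z(H)|$: without a uniform bound on the number of generators of $H/Z(H)$, the commutator pairing into a bounded target gives no control on the size of $H/Z(H)$, as shown by the elementary abelian example of Remark~\ref{remark:oops}. The bounded rank hypothesis on $\G_2$ is exactly what is needed to close this gap.
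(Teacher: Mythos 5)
Your argument is correct in substance, but it takes a genuinely different route from the paper's at the decisive step, and it contains one small slip that needs repair. Both proofs start with the same reduction: pull back a normal abelian subgroup $A_2$ of the image with $[F_2:A_2]\le J_2$ and with $A_2$ generated by at most $R_2$ elements, so that one is reduced to an extension of a group of order at most $B_1$ by an abelian group of bounded rank. From there the paper bounds the index of the center $Z$ of the whole preimage directly: the commutator subgroup $K$ has order at most $B_1$ because the quotient is abelian, the centralizer of any single element $x$ has index equal to the number of commutators $gxg^{-1}x^{-1}$ and hence at most $|K|$, the preimage admits a generating set of boundedly many (at most $R_2 B_1$) elements, and $Z$ is the intersection of the centralizers of these generators, giving $[G:Z]\le B_1^{R_2 B_1}$. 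You instead pass to $H=F'\cap C_F(F_1)$, note that $H$ is nilpotent of class at most $2$ with $[H,H]$ of order at most $B_1$, and bound $[H:Z(H)]$ by feeding the nondegenerate alternating commutator pairing into a count of homomorphisms from a group of rank at most $R_2$ into $[H,H]$. This is a clean alternative mechanism (and gives the slightly better constant $J_2\cdot B_1!\cdot B_1^{R_2}$), whereas the paper's version avoids the detour through the centralizer of $F_1$ at the cost of a larger exponent. The slip: $F_1$ need not be abelian, so $F_1\not\subseteq C_F(F_1)$ in general, and therefore the assertions ``$F_1\subseteq Z(H)$'' and ``$H/F_1$ embeds into $A_2$'' are not literally correct as written. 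Replace $F_1$ by $H\cap F_1$ throughout that passage: since every element of $H$ centralizes all of $F_1$, one does have $H\cap F_1\subseteq Z(H)$, and $H/(H\cap F_1)\cong HF_1/F_1$ embeds into $F'/F_1\cong A_2$, so it is abelian of rank at most $R_2$; the remainder of your argument, including the bound $|H/Z(H)|\le |[H,H]|^{R_2}\le B_1^{R_2}$, then goes through verbatim.
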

\begin{proof}
It is enough to prove that
if $F$ is a finite group and $A$ is a finite abelian group
generated by $r$ elements, then for any extension
$$1\longrightarrow F\longrightarrow G\longrightarrow A\longrightarrow 1$$
one can bound the index $[G:Z]$ of the center $Z$ of the group $G$
in terms of $|F|$ and $r$.

Let $K\subset G$ be the commutator subgroup.
Since $A$ is abelian, one has
$$|K|\le \frac{|G|}{|A|}=|F|.$$
For $x\in G$ let $K(x)$ be the set of the commutators
of the form $gxg^{-1}x^{-1}$ for various $g\in G$,
and $Z(x)\subset G$ be the centralizer of $x$.
It is easy to see that
$$[G:Z(x)]=|K(x)|\le |K|.$$
Now if
$\{x_1,\ldots, x_N\}\subset G$ is a subset that generates $G$,
then $Z=Z(x_1)\cap\ldots\cap Z(x_N)$, so that
$$[G:Z]\le [G:Z(x_1)]\cdot\ldots\cdot [G:Z(x_N)]\le
|K(x_1)|\cdot\ldots\cdot |K(x_N)|\le |K|^N\le |F|^N.$$
It remains to notice that one can choose a set of
$N\le r|F|$ generators for the group $G$.
\end{proof}

\begin{remark}
Let $\G$ be a family of
groups, and let $\tilde{\G}$ be the family that consists
of all finite subgroups of all groups in $\G$. Then
the family $\G$ has bounded finite subgroups
(resp., is uniformly Jordan, has finite subgroups
of uniformly bounded rank) if and only if
the family $\tilde{\G}$ has bounded finite subgroups
(resp., is uniformly Jordan, has finite subgroups
of uniformly bounded rank). We will sometimes use this trivial 
observation without any further comments 
while applying Lemmas~\ref{lemma:BFS-by-BFS},
\ref{lemma:Jordan-by-BFS}, \ref{lemma:BR-by-BR} 
and~\ref{lemma:Jordan-by-BFS-with-BR}.
\end{remark}


Now we will discuss some important
examples of groups with bounded finite subgroups
and of Jordan groups.
We will use the following notation.

\begin{definition}\label{definition:Aut-g}
Let $A$ be an abelian variety over $\Bbbk$.
By $\Aut_{g}(A)$ we denote the group of
automorphisms of $A$ as a $\Bbbk$-variety
(i.\,e. the group of automorphisms of the variety $A$ that
may not respect the group structure on $A$).
\end{definition}

\begin{remark}\label{remark:Aut-g}
One has
$$\Aut_{g}(A)\simeq A(\Bbbk)\rtimes\Gamma,$$
where $A(\Bbbk)$ denotes the group
of $\Bbbk$-points of the abelian variety $A$ and $\Gamma$ is a subgroup
of~$\GL_{2\dim(A)}(\Z)$.
\end{remark}

The following is a well-known theorem of H.\,Minkowski
(see e.\,g.~\cite[Theorem~5]{Serre2007}
and~\cite[\S4.3]{Serre2007}).

\begin{theorem}
\label{theorem:GL-number-field}
Suppose that $\Bbbk$ is a finitely generated field over $\Q$,
and~$m$ is a positive integer.
Then the group~\mbox{$\GL_m(\Bbbk)$} has
bounded finite subgroups.
\end{theorem}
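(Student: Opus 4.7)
The plan is to reduce to H.\,Minkowski's classical bound on finite subgroups of $\GL_N(\Z)$, via a two-step reduction: first embed $\GL_m(\Bbbk)$ into $\GL_{mN}(\Bbbk_0)$ for a purely transcendental subfield $\Bbbk_0\subset\Bbbk$, and then specialize to embed any given finite subgroup into $\GL_{mN}(\Q)$.

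First I would write $\Bbbk$ as a finite extension of $\Bbbk_0:=\Q(t_1,\ldots,t_d)$, where $d$ is the transcendence degree of $\Bbbk$ over $\Q$. Choosing a $\Bbbk_0$-basis of $\Bbbk$ of size $N:=[\Bbbk:\Bbbk_0]$, the regular representation of $\Bbbk$ on itself yields an embedding $\GL_m(\Bbbk)\hookrightarrow\GL_{mN}(\Bbbk_0)$. Hence it is enough to bound finite subgroups of $\GL_{mN}(\Bbbk_0)$ by a constant depending only on $m$, $N$, and $d$.

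Now let $G\subset\GL_{mN}\bigl(\Q(t_1,\ldots,t_d)\bigr)$ be an arbitrary finite subgroup. Its matrix entries, together with the inverses of the determinants of its elements, generate a $\Q$-subalgebra $R$ of $\Bbbk_0$ of the form $\Q[t_1,\ldots,t_d]\bigl[1/f\bigr]$ for a suitable polynomial $f$. For every $a=(a_1,\ldots,a_d)\in\Q^d$ with $f(a)\ne 0$, the specialization $t_i\mapsto a_i$ defines a ring homomorphism $R\to\Q$ and thus a group homomorphism $\Phi_a\colon G\to\GL_{mN}(\Q)$. For each $g\in G\setminus\{1\}$, at least one entry of the matrix $g-I$ is a nonzero element of $R$, so its zero locus is a proper Zariski-closed subset of $\A^d_\Q$. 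Since $\Q^d$ is Zariski-dense in $\A^d_\Q$, I can choose $a$ outside the finite union of these closed subsets (and away from the vanishing of $f$); then $\Phi_a$ is injective, so $G$ embeds into $\GL_{mN}(\Q)$.

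Finally I would invoke Minkowski's classical theorem for $\GL_{mN}(\Q)$: any finite subgroup stabilizes some $\Z$-lattice (for instance $\sum_{g\in G} g\cdot\Z^{mN}$), so it embeds into $\GL_{mN}(\Z)$, and reduction modulo $3$ is injective on torsion, giving $G\hookrightarrow\GL_{mN}(\F_3)$ and hence $|G|\le|\GL_{mN}(\F_3)|$. The resulting bound depends only on $mN$, which depends only on $\Bbbk$ and $m$. The one point requiring care is the injectivity in the specialization step, but this follows immediately from the Zariski density of rational points in affine space over the infinite field $\Q$; no deeper input is needed.
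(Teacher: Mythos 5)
Your proof is correct; every step checks out. Note, however, that the paper does not actually prove this statement: it records it as a classical theorem of Minkowski and cites \cite[Theorem~5 and \S4.3]{Serre2007}, so there is no in-paper argument to compare against line by line. Serre's cited result works for general reductive groups and obtains sharp Minkowski-style bounds via cyclotomic invariants of the field (the invariants $t$ and $m$ of \cite[\S4]{Serre2009}, as the paper itself remarks in Remark~\ref{remark:Serre}). Your route is the more elementary and self-contained one: restriction of scalars from $\Bbbk$ down to a purely transcendental subfield $\Q(t_1,\ldots,t_d)$, then specialization of a given finite subgroup at a rational point chosen (by Zariski density of $\Q^d$ in $\A^d_{\Q}$) to avoid the vanishing loci of the finitely many nonzero entries of the matrices $g-I$, and finally the classical Minkowski lemma over $\Z$ (stabilized lattice plus injectivity of reduction mod~$3$ on torsion). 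The resulting bound $|\GL_{mN}(\F_3)|$ with $N=[\Bbbk:\Q(t_1,\ldots,t_d)]$ is far from sharp, but the theorem only asks for the existence of a bound depending on $\Bbbk$ and $m$, which your argument delivers. The two minor points worth spelling out — that a finite union of proper closed subsets of $\A^d_\Q$ misses some rational point, and that the determinants are automatically units in the ring generated by the entries of a finite matrix group — are both handled adequately in your write-up.
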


\begin{corollary}\label{corollary:GLZ}
The group $\GL_m(\Z)$ has
bounded finite subgroups.
\end{corollary}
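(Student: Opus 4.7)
The plan is to reduce this immediately to Theorem~\ref{theorem:GL-number-field}. The field $\Q$ is trivially a finitely generated field over $\Q$, so that theorem applies with $\Bbbk=\Q$ and tells us that $\GL_m(\Q)$ has bounded finite subgroups: there is a constant $B=B(m)$ such that every finite subgroup of $\GL_m(\Q)$ has order at most $B$.

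Next I would use the natural inclusion $\GL_m(\Z)\hookrightarrow \GL_m(\Q)$ induced by the ring embedding $\Z\hookrightarrow\Q$. Any finite subgroup $G\subset\GL_m(\Z)$ maps injectively into $\GL_m(\Q)$, and is therefore identified with a finite subgroup of $\GL_m(\Q)$. Hence $|G|\le B$, which is precisely the assertion that $\GL_m(\Z)$ has bounded finite subgroups in the sense of Definition~\ref{definition:BFS}.

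There is no real obstacle here: the whole content of the statement is absorbed by Minkowski's theorem (Theorem~\ref{theorem:GL-number-field}), and the only thing to check is the essentially tautological fact that a subgroup of a group with bounded finite subgroups again has bounded finite subgroups, with the same bound.
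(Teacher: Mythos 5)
Your proof is correct and matches the paper's intent exactly: the paper states this as an immediate corollary of Theorem~\ref{theorem:GL-number-field} (with no written proof), the implicit argument being precisely the inclusion $\GL_m(\Z)\subset\GL_m(\Q)$ with $\Bbbk=\Q$.
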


\begin{corollary}
\label{corollary:lattice}
Let $N$ be a finitely generated abelian group.
Then the group
$\Aut(N)$ has bounded finite subgroups.
\end{corollary}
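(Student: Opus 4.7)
The plan is to reduce the claim to Corollary~\ref{corollary:GLZ} via the structure theorem for finitely generated abelian groups. First I would write $N \cong \Z^r \oplus T$, where $r \ge 0$ and $T$ is the finite torsion subgroup of $N$. Since $T$ is a characteristic subgroup, every automorphism of $N$ preserves $T$ and induces an automorphism of the quotient $N/T \cong \Z^r$. This gives a natural homomorphism
$$\rho\colon \Aut(N) \longrightarrow \Aut(T) \times \GL_r(\Z).$$

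Next I would compute the kernel of $\rho$. An element of $\ker(\rho)$ acts trivially both on $T$ and on $N/T$, so, writing elements of $N$ as pairs $(n,t)$ with $n\in\Z^r$ and $t\in T$, it has the form $(n,t)\mapsto(n,t+\phi(n))$ for some homomorphism $\phi\colon \Z^r\to T$. Thus $\ker(\rho)\cong \operatorname{Hom}(\Z^r,T)\cong T^r$ is finite, and in particular has bounded finite subgroups. On the other hand, $\Aut(T)$ is finite since $T$ is finite, and $\GL_r(\Z)$ has bounded finite subgroups by Corollary~\ref{corollary:GLZ}; hence Lemma~\ref{lemma:BFS-by-BFS} applied to the split extension
$$1\longrightarrow \Aut(T)\longrightarrow \Aut(T)\times \GL_r(\Z)\longrightarrow \GL_r(\Z)\longrightarrow 1$$
shows that $\Aut(T)\times\GL_r(\Z)$, and hence also the image of $\rho$, has bounded finite subgroups.

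Finally I would apply Lemma~\ref{lemma:BFS-by-BFS} once more, to the exact sequence
$$1\longrightarrow \ker(\rho)\longrightarrow \Aut(N)\longrightarrow \mathrm{Im}(\rho)\longrightarrow 1,$$
to conclude that $\Aut(N)$ itself has bounded finite subgroups. There is essentially no obstacle here: the only nontrivial input is Minkowski's theorem packaged as Corollary~\ref{corollary:GLZ}, and the only step that requires any care is the explicit identification of $\ker(\rho)$ with $T^r$.
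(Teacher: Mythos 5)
Your proposal is correct and is essentially the paper's own proof: the paper likewise uses the characteristic torsion subgroup $T$ to produce the exact sequence $0\to T^r\to\Aut(N)\to\Aut(T)\times\Aut(N/T)\to 1$ and then applies Corollary~\ref{corollary:GLZ} and Lemma~\ref{lemma:BFS-by-BFS}. Your explicit identification of the kernel with $\operatorname{Hom}(\Z^r,T)\simeq T^r$ just spells out a step the paper leaves implicit.
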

\begin{proof}
One has an exact sequence
$$0\longrightarrow T\longrightarrow N\longrightarrow N/T\longrightarrow 0,$$
where $T$ is the torsion subgroup of $N$ and $N/T$ is a free abelian group
of finite rank $r$.
Therefore, one has an exact sequence
$$0\longrightarrow T^r\longrightarrow \Aut(N)\longrightarrow \Aut(T)\times\Aut(N/T)\longrightarrow 1.$$
The group $\Aut(N/T)\simeq\GL_r(\Z)$ has bounded finite subgroups
by Corollary~\ref{corollary:GLZ},
while the groups $T^r$ and $\Aut(T)$ are finite.
Now the assertion follows by Lemma~\ref{lemma:BFS-by-BFS}.
\end{proof}

\begin{corollary}
\label{corollary:abelian-variety-Jordan}
Let $\AA_d$ be the family of groups $\Aut_{g}(A)$, where $A$ varies
in the set
of abelian varieties of dimension $d$ over a field $\Bbbk$,
while $\Bbbk$ varies in the set of all fields of characteristic zero.
Then $\AA_d$ is uniformly Jordan and has finite subgroups of uniformly 
bounded rank.
\end{corollary}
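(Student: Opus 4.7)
The plan is to exploit the semidirect-product decomposition $\Aut_{g}(A) \simeq A(\Bbbk) \rtimes \Gamma$ with $\Gamma \subset \GL_{2d}(\Z)$ recalled in Remark~\ref{remark:Aut-g}. This provides, for each $A$ and $\Bbbk$, an exact sequence
$$1 \longrightarrow A(\Bbbk) \longrightarrow \Aut_{g}(A) \longrightarrow \Gamma \longrightarrow 1,$$
and I will verify the required finiteness properties separately for the kernel and for the quotient, then assemble them via the extension lemmas of this section.

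For the quotient, each $\Gamma$ is contained in $\GL_{2d}(\Z)$, so Corollary~\ref{corollary:GLZ} gives a uniform bound (depending only on $d$) on the order of every finite subgroup of every such $\Gamma$; by Remark~\ref{remark:stupid} the family $\{\Gamma\}$ also has finite subgroups of uniformly bounded rank. For the kernel, $A(\Bbbk)$ is abelian, so the family $\{A(\Bbbk)\}$ is trivially uniformly Jordan with constant $1$. To bound the rank of an arbitrary finite subgroup $F \subset A(\Bbbk)$, I would invoke the standard structure of the $n$-torsion of an abelian variety in characteristic zero: $A[n](\bar{\Bbbk}) \simeq (\Z/n\Z)^{2d}$. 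Any such $F$ has some finite exponent $n$ and therefore embeds into $(\Z/n\Z)^{2d}$, so it is generated by at most $2d$ elements; this bound is uniform in both $A$ and $\Bbbk$.

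Combining these inputs via the above exact sequence, Lemma~\ref{lemma:Jordan-by-BFS} (with $\G_1 = \{A(\Bbbk)\}$ uniformly Jordan and $\G_2 = \{\Gamma\}$ having uniformly bounded finite subgroups) will yield the uniform Jordan property of $\AA_d$, and Lemma~\ref{lemma:BR-by-BR} (both families having uniformly bounded rank) will yield that finite abelian subgroups in $\AA_d$ have uniformly bounded rank.

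I do not expect any real obstacle: the statement is essentially a formal consequence of Remark~\ref{remark:Aut-g}, Minkowski's theorem in the form of Corollary~\ref{corollary:GLZ}, and the elementary description of the $n$-torsion on an abelian variety. The only point worth noting explicitly is that all bounds are genuinely uniform in $\Bbbk$: the bound from $\GL_{2d}(\Z)$ is $\Bbbk$-independent by construction, and the rank bound $2d$ on finite subgroups of $A(\Bbbk)$ is controlled by the geometric torsion and so is also $\Bbbk$-independent.
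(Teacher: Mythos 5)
Your proposal is correct and follows essentially the same route as the paper: the semidirect-product decomposition of Remark~\ref{remark:Aut-g}, Corollary~\ref{corollary:GLZ} and Lemma~\ref{lemma:Jordan-by-BFS} for the Jordan property, and the $2d$-generation of torsion subgroups together with Lemma~\ref{lemma:BR-by-BR} for the rank bound. Your use of Remark~\ref{remark:stupid} to bound the rank of finite subgroups of $\GL_{2d}(\Z)$ is exactly the alternative the paper itself mentions in place of the diagonalizability argument.
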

\begin{proof}
To prove that $\AA_d$ is uniformly Jordan, apply Remark~\ref{remark:Aut-g},
Corollary~\ref{corollary:GLZ}
and Lemma~\ref{lemma:Jordan-by-BFS}.

Let us prove that $\AA_d$ has finite subgroups of uniformly bounded rank.
Let $A$ be an abelian variety of dimension $d$. Then for any positive integer
$n$ the $n$-torsion subgroup of the group $A\big(\bar{\Bbbk}\big)$, where
$\bar{\Bbbk}$ is the algebraic closure of $\Bbbk$,
is isomorphic to $\big(\Z/n\Z\big)^{2d}$, which implies that
any finite subgroup of
$A(\Bbbk)$ is generated by at most $2d$ elements.
Furthermore, any finite abelian
subgroup of $\GL_{m}\big(\bar{\Bbbk}\big)$
is diagonalizable, so that
any finite abelian subgroup $H\subset\GL_{2d}(\Z)$
is also generated by at most $2d$
elements (alternatively, one can use Corollary~\ref{corollary:GLZ}
to deduce that $\GL_{2d}(\Z)$ has finite subgroups of bounded rank). 
Now the assertion follows by
Remark~\ref{remark:Aut-g} and Lemma~\ref{lemma:BR-by-BR}.
\end{proof}

\begin{corollary}
\label{corollary:abelian-variety-BFS}
Suppose that $\Bbbk$ is a finitely generated field over $\Q$.
Let $A$ be an abelian variety over $\Bbbk$.
Then the group $\Aut_{g}(A)$ has bounded finite subgroups.
\end{corollary}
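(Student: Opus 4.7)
The plan is to combine the semidirect product description of $\Aut_g(A)$ from Remark~\ref{remark:Aut-g} with the Mordell--Weil--Néron--Lang theorem and the Minkowski-type result Corollary~\ref{corollary:GLZ}. By Remark~\ref{remark:Aut-g} there is an exact sequence
$$1\longrightarrow A(\Bbbk)\longrightarrow \Aut_g(A)\longrightarrow \Gamma\longrightarrow 1,$$
where $\Gamma$ is a subgroup of $\GL_{2\dim(A)}(\Z)$. In view of Lemma~\ref{lemma:BFS-by-BFS}, it suffices to show that both $A(\Bbbk)$ and $\Gamma$ have bounded finite subgroups.

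For the quotient, Corollary~\ref{corollary:GLZ} says that $\GL_{2\dim(A)}(\Z)$ has bounded finite subgroups, and the property is obviously inherited by the subgroup $\Gamma$. For the kernel, the key input is that when $\Bbbk$ is finitely generated over $\Q$, the Mordell--Weil--Néron--Lang theorem asserts that $A(\Bbbk)$ is a finitely generated abelian group. Consequently its torsion subgroup $A(\Bbbk)_{\tors}$ is finite, and since every finite subgroup of an abelian group is contained in the torsion subgroup, $A(\Bbbk)$ has bounded finite subgroups with bound $|A(\Bbbk)_{\tors}|$.

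Applying Lemma~\ref{lemma:BFS-by-BFS} to the above extension then yields the assertion. The main (non-trivial) ingredient is the Mordell--Weil--Néron--Lang theorem guaranteeing finite generation of $A(\Bbbk)$; once this is in hand, the argument is a direct assembly of already-established facts from the paper and involves no further geometric input.
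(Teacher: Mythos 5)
Your proof is correct and follows essentially the same route as the paper: the semidirect product decomposition of $\Aut_g(A)$ from Remark~\ref{remark:Aut-g}, the Mordell--Weil (N\'eron--Lang) theorem to get finite generation of $A(\Bbbk)$ and hence bounded finite subgroups, Corollary~\ref{corollary:GLZ} for the $\GL_{2\dim(A)}(\Z)$ part, and Lemma~\ref{lemma:BFS-by-BFS} to assemble the pieces. No gaps.
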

\begin{proof}
Recall that the group $A(\Bbbk)$ of $\Bbbk$-points of
$A$ is a finitely generated abelian group by the Mordell--Weil theorem
(see~\cite[Chapter~6, Theorem~1]{Lang-1983-book}).
Thus $A(\Bbbk)$ has bounded finite subgroups,
so that the assertion follows by Remark~\ref{remark:Aut-g},
Corollary~\ref{corollary:GLZ}
and Lemma~\ref{lemma:BFS-by-BFS}.
\end{proof}

When $\Bbbk$ is a number field, it is expected that
a stronger version of Corollary~\ref{corollary:abelian-variety-BFS}
holds. The starting point here is the following.

\begin{conjecture}
\label{conjecture:torsion}
Suppose that $\Bbbk$ is a finitely generated field over $\Q$, and
$d$ is a positive integer.
Then there is a constant $M=M(d)$ such that for any abelian variety
$A$ of dimension $d$ the order of the torsion subgroup
$A(\Bbbk)_{\tors}$ in $A(\Bbbk)$ is less than $M$.
\end{conjecture}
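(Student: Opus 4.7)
The plan is to try to adapt the strategy behind Mazur's theorem in the case $d=1$, $\Bbbk=\Q$, and Merel's theorem in the case $d=1$, $\Bbbk$ an arbitrary number field, both of which establish the conjecture for elliptic curves. By Zarhin's trick, replacing $A$ with $(A\times A^{\vee})^4$, one reduces to the case when $A$ is principally polarized, at the cost of inflating $d$ by a factor of $8$. After this reduction it suffices to show that for some constant $M(d,\Bbbk)$ and every $N>M(d,\Bbbk)$ the moduli space $\mathcal Y_{d,N}$ parametrizing pairs $(A,P)$ with $A$ a principally polarized abelian variety of dimension $d$ and $P\in A$ a point of exact order $N$ has no $\Bbbk$-rational point.

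To control the ``generic'' part of $\mathcal Y_{d,N}$ I would invoke Galois representations: by Faltings and Serre, for a generic such $A$ the image of $\mathrm{Gal}(\bar\Bbbk/\Bbbk)$ in $\GL(T_{\ell}A)\simeq \GL_{2d}(\Z_{\ell})$ is open in the $\Z_{\ell}$-points of the Mumford--Tate group, and the existence of a $\Bbbk$-rational point of order $\ell^{k}$ forces this image into a proper point stabilizer, cutting $A$ out by closed conditions. Combined with Faltings' finiteness theorem for abelian varieties of bounded polarization degree and conductor, this should yield boundedness of the $\ell$-primary torsion uniformly in $\ell$. The remaining non-generic locus --- abelian varieties with extra endomorphisms, or with non-trivial isogeny decompositions --- would be addressed by induction on $d$, since it is parametrized by moduli spaces of abelian varieties of strictly smaller dimension (possibly over finite extensions of $\Bbbk$, which does not affect the statement in view of Corollary~\ref{corollary:abelian-variety-BFS}).

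The principal obstacle is that for $d\ge 2$ there is no known analogue of Mazur's formal-immersion argument at the boundary of the compactified moduli stack, nor any substitute for the Kolyvagin-type bound on ranks of winding quotients that Kamienny--Mazur--Merel use to eliminate $\Bbbk$-points on $X_1(N)$. Consequently no uniform bound on the $\Bbbk$-rational points of $\mathcal Y_{d,N}$ for large $N$ is currently available in higher dimension: even the case $d=2$, $\Bbbk=\Q$ of the statement is open. For this reason the assertion is adopted here as a conjecture rather than proved, and will only be used conditionally in what follows.
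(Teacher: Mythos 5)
This statement is stated in the paper as a conjecture, not a theorem: the paper offers no proof, records that it is known only for $d=1$ (Merel's theorem for number fields, extended to finitely generated fields over $\Q$ by a standard specialization argument), and even cautions that the conjecture is not universally regarded as credible. You have correctly recognized this and rightly declined to claim a proof; your sketch of the Mazur--Kamienny--Merel strategy via the moduli space of pairs $(A,P)$, the reduction to the principally polarized case by Zarhin's trick, and the identification of the missing ingredients (no formal-immersion argument at the boundary, no analogue of the winding-quotient rank bounds) is an accurate account of why the case $d\ge 2$ remains open, which is consistent with the paper's own treatment of the statement as a conditional hypothesis.
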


Note that Conjecture~\ref{conjecture:torsion} is not universally
recognized as credible (cf.~\cite[Question~2.1]{Poonen12};
see also~\cite[Boundedness Conjecture]{MortonSilverman94}
and~\cite[Corollary~2.4]{Fakhruddin03}).
Conjecture~\ref{conjecture:torsion}
is proved only for dimension $d=1$, i.\,e. for elliptic curves.
The case of a number field was established in~\cite{Merel},
and the case of an arbitrary field
$\Bbbk$ finitely generated over $\Q$ is derived from it
in a standard way (see e.\,g. the remark made after Question~2.1
in~\cite{Poonen12}).

Modulo Conjecture~\ref{conjecture:torsion} one has the following
refined version of Corollary~\ref{corollary:abelian-variety-BFS}.

\begin{corollary}
\label{corollary:abelian-variety-BFS-torsion}
Suppose that $\Bbbk$ is a finitely generated field over $\Q$.
Let $\AA_d(\Bbbk)$ be the family 
of groups~\mbox{$\Aut_{g}(A)$}, where $A$ varies
in the set
of abelian varieties of dimension~$d$ over~$\Bbbk$.
Suppose that Conjecture~\ref{conjecture:torsion} holds in dimension $d$
over $\Bbbk$.
Then $\AA_d(\Bbbk)$ has uniformly bounded finite subgroups.
\end{corollary}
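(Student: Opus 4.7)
The plan is to combine Remark~\ref{remark:Aut-g} with Lemma~\ref{lemma:BFS-by-BFS}, feeding in Conjecture~\ref{conjecture:torsion} to control the translational part and Corollary~\ref{corollary:GLZ} to control the linear part. Concretely, for each abelian variety $A$ of dimension $d$ over $\Bbbk$, Remark~\ref{remark:Aut-g} provides an exact sequence
$$1\longrightarrow A(\Bbbk)\longrightarrow \Aut_g(A)\longrightarrow \Gamma_A\longrightarrow 1,$$
where $\Gamma_A$ is a subgroup of $\GL_{2d}(\Z)$.

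First I would observe that the family $\{A(\Bbbk)\}_A$, with $A$ ranging over abelian varieties of dimension $d$ over $\Bbbk$, has uniformly bounded finite subgroups. Indeed, any finite subgroup of $A(\Bbbk)$ lies in the torsion subgroup $A(\Bbbk)_{\tors}$, and Conjecture~\ref{conjecture:torsion} (assumed in dimension $d$ over $\Bbbk$) yields a constant $M=M(d)$ with $|A(\Bbbk)_{\tors}|<M$ independent of $A$. Second, the family $\{\Gamma_A\}_A$ consists of subgroups of the single group $\GL_{2d}(\Z)$, which has bounded finite subgroups by Corollary~\ref{corollary:GLZ}; hence this family has uniformly bounded finite subgroups as well.

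Having packaged these two families, I would apply Lemma~\ref{lemma:BFS-by-BFS} directly to the exact sequence displayed above to conclude that $\AA_d(\Bbbk)$ has uniformly bounded finite subgroups. There is no substantive obstacle here: the content of the statement is essentially the combination of Mordell--Weil, Minkowski's theorem for $\GL_{2d}(\Z)$, and the torsion bound of Conjecture~\ref{conjecture:torsion}, assembled via the extension lemma. The only mild subtlety is to recall that in Lemma~\ref{lemma:BFS-by-BFS} it is uniform boundedness of \emph{finite} subgroups of the $G_i$'s that matters, so the infiniteness of $A(\Bbbk)$ itself is harmless as long as its torsion is uniformly bounded.
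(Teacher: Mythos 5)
Your proposal is correct and is essentially the argument the paper intends: the corollary is stated as a ``refined version'' of Corollary~\ref{corollary:abelian-variety-BFS} with no separate proof, and the intended refinement is exactly yours --- replace the Mordell--Weil finite-generation step by the uniform torsion bound of Conjecture~\ref{conjecture:torsion}, keep Remark~\ref{remark:Aut-g} and Corollary~\ref{corollary:GLZ} for the linear part, and assemble via Lemma~\ref{lemma:BFS-by-BFS}. Your observation that finite subgroups of $A(\Bbbk)$ lie in $A(\Bbbk)_{\tors}$ is the only point that needs saying, and you said it.
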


\section{Divisor class groups}
\label{section:groups-divisors}

The following notion is well-known and widely used (cf.~\cite[\S4]{Prokhorov-Shramov}).

\begin{lemma-definition}\label{regularization}
Let $X$ be a variety
and \mbox{$G\subset\Bir(X)$} be a finite group.
There exists a normal projective variety $\tilde X$
with a biregular action of $G$ and a $G$-equivariant birational
map $\tilde X \dashrightarrow X$.
The variety $\tilde X$ is called
a \emph{regularization}
of $G$. Moreover, $\tilde X$  can be taken smooth
and then $\tilde X$ is called
a \emph{smooth regularization}
of $G$.
\end{lemma-definition}
\begin{proof}
By shrinking $X$ we may assume that $X$ is affine and
$G$ acts on $X$ biregularly.
Then the quotient $V=X/G$ is also affine,
so there exists a projective completion $\tilde V\supset V$.
Let~$\tilde{X}$ be the normalization of $\tilde V$ in the function field
$\Bbbk(X)$. Then $\tilde X$ is a projective variety \mbox{$G$-birational}
to~$X$, and $\tilde{X}$ admits a biregular action of $G$.
Taking a $G$-equivariant resolution
of singularities (see~\cite{Bierstone2008}),
one can assume that $\tilde{X}$ is smooth.
\end{proof}

Let $X$ be a normal projective variety.
By $\Cl(X)$ we denote
the group of Weil divisors on~$X$ modulo linear equivalence, and by
$\Cl^0(X)$
its subgroup consisting of divisors that are algebraically equivalent to $0$.
Let $f\colon\tilde X\to X$
be a resolution of singularities. It induces a natural map
$$f_*\colon\Cl^0(\tilde X) \longrightarrow \Cl^0(X).$$
The group
$\Cl(X)$ is canonically
isomorphic to the quotient of $\Cl(\tilde X)$
by the subgroup~\mbox{$\EE\subset\Cl(\tilde{X})$}
generated by $f$-exceptional divisors.
Since prime exceptional divisors are linearly independent modulo
numerical equivalence (see e.g. \cite[Lemma 2.19]{Utah}),
we have
$$\Cl^0(X)\cap\EE=0.$$
Hence $f_*\colon\Cl^0(\tilde X) \to \Cl^0(X)$ is an isomorphism.
In particular, $\operatorname{Cl}^0(X)$
is a birational invariant in the
category of projective varieties.
Moreover, $f_*$ induces a structure of an abelian variety on $\Cl^0(X)$.
The group
$$\NS^{\W}(X)= \operatorname{Cl}(X) /\operatorname{Cl}^0(X)$$
is a homomorphic image of the group
$$\NS^{\W}(\tilde{X})=\Cl(\tilde{X})/\Cl^0(\tilde{X})
\simeq\Pic(\tilde X)/\Pic^0(\tilde X),$$
which is finitely generated
by the Neron--Severi theorem.
Therefore, $\NS^{\W}(X)$ is also finitely generated.
Slightly abusing the standard terminology,
we will refer to the group~$\NS^{\W}(X)$ as \emph{the Neron--Severi
group} of $X$.

\begin{remark}
Let $G\subset \Bir(X)$ be a finite subgroup. By
Lemma-Definition~\ref{regularization}
we can choose a regularization $\tilde{X}$ of $G$, so that
$G\subset \Aut(\tilde{X})$. Since $\Pic^0$
is a functor, the group $G$ naturally acts on
$\Cl^0(X)\simeq \Pic^0(\tilde{X})$, and this action
does not depend on our choice of the resolution~$f$.
\end{remark}

\section{Quasi-minimal models}
\label{section:quasi-minimal}

Starting from this point we will use standard terminology
and conventions of the Minimal Model Program
(see e.\,g. \cite{KMM} or \cite{Matsuki2002}).
We note that there exist natural generalizations of the Minimal Model
Program to 
the cases of varieties over non-closed field 
and varieties with group action.
Since these notions are quite standard 
(see e.\,g.~\mbox{\cite[\S2.2]{Kollar-Mori-1988}}), 
we will refer to the recent results of~\cite{BCHM}
concerning the Minimal 
Model Program without further comments on these different
setups.

In this section we introduce the notion of quasi-minimal models,
following the idea of Caucher Birkar. This is a weaker analog
of a usual notion of minimal models which has an advantage that
to prove its existence we do not need the full strength of
the Minimal Model Program.

\begin{definition}\label{definition:quasi-minimal}
An effective $\Q$-divisor $M$ on a variety $X$
is said to be \textit{$\Q$-movable} if for some $n>0$
the divisor $nM$ is integral and generates a linear system
without fixed components.
\end{definition}

\begin{definition}\label{definition:qm}
Let $X$ be a projective variety with terminal singularities.
We say that~$X$ is a \emph{quasi-minimal model}
if there exists a sequence of $\Q$-movable $\Q$-Cartier 
$\Q$-divisors $M_j$
whose limit in the Neron--Severi space
$\NS_{\Q}^{\W}(X)=\NS^{\W}(X)\otimes\Q$ is $K_X$.
\end{definition}

\begin{remark}\label{remark:qm-non-uniruled}
Any minimal model is a quasi-minimal model by Kleiman ampleness criterion.
By~\cite[Theorem~1]{MiyaokaMori} any quasi-minimal model is non-uniruled.
\end{remark}

Now we will show that the current state of art in the Minimal
Model Program allows to prove the existence of quasi-minimal models.
Recall that a (normal) 
variety $X$ acted on by a finite group $\Gamma$
has \emph{$\Gamma\Q$-factorial} singularities, if and only if any
$\Gamma$-invariant Weil divisor on $X$ is $\Q$-Cartier.

\begin{lemma}\label{lemma-quasi-minimal-model}
Let $X$ be a projective non-uniruled variety with terminal singularities,
and~\mbox{$\Gamma\subset\Aut(X)$} be a finite subgroup.
Assume that $X$ has $\Gamma\Q$-factorial singularities.
Then there exists a
quasi-minimal model $X'$ birational to $X$
such that $\Gamma\subset\Aut(X')$.
\end{lemma}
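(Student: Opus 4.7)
\medskip\noindent
\textbf{Proof plan.}
My approach is to run a $\Gamma$-equivariant minimal model program on a small ample perturbation of $K_X$ and then let the perturbation tend to zero to present $K_{X'}$ as a limit of movable classes. The first step is to average a very ample divisor on $X$ over the finite group $\Gamma$ to produce a $\Gamma$-invariant ample $\Q$-Cartier divisor $A$. For every sufficiently small rational $\epsilon>0$, the pair $(X,\epsilon A)$ is then Kawamata log terminal (because $X$ is terminal) with big boundary $\epsilon A$. Non-uniruledness of $X$ forces $K_X$ to be pseudo-effective (by the theorem of Boucksom--Demailly--P\u{a}un--Peternell), and hence $K_X+\epsilon A$ is pseudo-effective as well.

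Second, I would run a $\Gamma$-equivariant $(K_X+\epsilon A)$-MMP with scaling of $A$. The $\Gamma$-equivariance is arranged by contracting complete $\Gamma$-orbits of extremal rays at once, using the $\Gamma\Q$-factoriality to control the intermediate models, and termination is provided by~\cite{BCHM}, whose hypotheses ($\epsilon A$ big and $K_X+\epsilon A$ pseudo-effective) are in place. No Mori fibre space can occur because $K_X+\epsilon A$ is pseudo-effective. Thus one obtains a $\Gamma$-equivariant birational map $\phi_\epsilon\colon X\dashrightarrow X'_\epsilon$, where $X'_\epsilon$ is a projective variety that is terminal and $\Gamma\Q$-factorial, admits a biregular $\Gamma$-action, and satisfies that $K_{X'_\epsilon}+\epsilon A'_\epsilon$ is nef, for $A'_\epsilon=(\phi_\epsilon)_*A$.

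Third, I would invoke the finiteness of log minimal models along a rational polytope of boundaries, which is yet another consequence of~\cite{BCHM}. Applied to the segment $\{(X,\epsilon A):\epsilon\in[0,1]\}$, it shows that the output model is constant on a non-empty open interval of the parameter, so there exists a rational $\epsilon_0>0$, a projective $\Gamma$-variety $X'$ with a $\Gamma$-equivariant birational map $\phi\colon X\dashrightarrow X'$, and a divisor $A'=\phi_*A$ such that for every rational $\epsilon\in(0,\epsilon_0)$ the divisor $K_{X'}+\epsilon A'$ on $X'$ is nef.

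Finally, any nef $\Q$-Cartier divisor on a projective variety is a limit, in $\NS_{\Q}^{\W}(X')$, of ample $\Q$-Cartier divisors (add $\delta B$ for some ample $B$ and send $\delta\to 0$), and every ample $\Q$-Cartier divisor is $\Q$-movable because a sufficiently large integral multiple is base-point free. Consequently, each $K_{X'}+\epsilon A'$ lies in the closure of the cone of $\Q$-movable classes, and letting $\epsilon\to 0^+$ deposits $K_{X'}$ itself in that closure, producing the sequence of $\Q$-movable $\Q$-divisors required by Definition~\ref{definition:qm}. I expect the main obstacle to be the first two technical ingredients above---ensuring the MMP can genuinely be run $\Gamma$-equivariantly and terminates, and then extracting a single target model $X'$ valid for an interval of $\epsilon$---which both rely essentially on the full strength of~\cite{BCHM}. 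Once these are in place, the final approximation argument is routine.
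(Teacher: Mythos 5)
Your first two steps (averaging to get a $\Gamma$-invariant ample $A$, and running a $\Gamma$-equivariant $(K_X+\epsilon A)$-MMP with scaling, which terminates by~\cite{BCHM} because the boundary is big and $K_X+\epsilon A$ is pseudo-effective) coincide with the paper's argument. The gap is in your third step. The finiteness-of-models results in~\cite{BCHM} apply to a polytope of boundaries of the form $A_0+B$ with $A_0$ a \emph{fixed} general ample divisor, i.e.\ with the ample part bounded away from zero; they do not apply to the degenerating segment $\{\epsilon A:\epsilon\in[0,1]\}$, whose endpoint $\epsilon=0$ has no big boundary at all. So you cannot conclude that the output model $X'_\epsilon$ stabilizes for $\epsilon$ in an interval $(0,\epsilon_0)$. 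A sanity check that this step must fail in general: if it held, then $K_{X'}+\epsilon A'$ would be nef for all small $\epsilon>0$ on a single model $X'$, and since the nef cone is closed, $K_{X'}$ itself would be nef --- you would have produced an honest minimal model of an arbitrary non-uniruled variety. That is precisely the open problem (termination/existence of minimal models in the non-big case) that the notion of quasi-minimal model is designed to sidestep; as the paper says, the point of Definition~\ref{definition:qm} is that its existence does not require the full strength of the MMP.

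The paper's proof handles exactly the situation your step 3 sweeps under the rug. It first runs the plain $\Gamma$-equivariant $K_X$-MMP, inducting on the Picard number whenever a divisorial contraction occurs; the only problematic case is an infinite sequence of $\Gamma$-flips. In that case it does \emph{not} try to extract a single limiting model from the perturbed runs $\psi_j\colon(X,t_jA)\dashrightarrow(X_j,t_jA_j)$. Instead it observes that every step of $\psi_j$ is $A$-positive and hence $K$-negative, so $\psi_j$ is a composition of flips and each $X_j$ is isomorphic to $X$ in codimension one. This identifies $\NS^{\W}_{\Q}(X_j)$ with $\NS^{\W}_{\Q}(X)$, carries $K_{X_j}$ to $K_X$, and transports the ample (hence $\Q$-movable) approximations of the nef classes $K_{X_j}+t_jA_j$ back to $\Q$-movable classes on $X$ converging to $K_X$. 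The conclusion is that $X$ \emph{itself} is a quasi-minimal model --- no stabilization of the $X_j$ is needed. Your final approximation step (nef $=$ limit of ample $=$ limit of $\Q$-movable) is fine and is also how the paper finishes, but it must be run on the fixed variety $X$ via these codimension-one identifications rather than on a hypothetical common model $X'$.
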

\begin{proof}
Run a $\Gamma$-equivariant Minimal Model Program
on~$X$.
Since $X$ is non-uniruled, we will never arrive to a non-birational
contraction by~\mbox{\cite[Corollary~5-1-4]{KMM}}.
Thus, if this $\Gamma$-equivariant Minimal Model Program
terminates, then it gives a minimal model (that is in particular,
a quasi-minimal model) $X'$ birational to $X$ such that 
$\Gamma\subset\Aut(X')$.

We use induction in the Picard number $\rho(X)=\dim\NS^{\W}_{\Q}(X)$.
If $\rho(X)=1$, then $X$ is a minimal model
(and in particular, a quasi-minimal model) itself. If some step 
of the $\Gamma$-Minimal Model Program is a divisorial contraction,
then the Picard number drops at least by one at this step,
and we proceed by induction.
The only disaster that may happen is that 
the $\Gamma$-equivariant Minimal Model Program ran on $X$ 
does not terminate, and each of its steps
is a $\Gamma$-flip.
We claim that in this case $X$ is a quasi-minimal model.

Take a very ample $\Gamma$-invariant divisor $A$ on $X$ and a
sequence of positive numbers~$t_j$ approaching  $0$.
According to \cite{BCHM} 
(or rather the $\Gamma$-equivariant versions of the 
corresponding theorems) we can run
a $\Gamma$-equivariant $(K_X+t_jA)$-Minimal Model Program
on~$X$ with scaling of $A$
to obtain a $\Gamma$-equivariant birational map
$$
\psi_j\colon (X, t_jA) \dashrightarrow (X_j, t_jA_j).
$$
Since $X$ is not uniruled, $(X_j, t_jA_j)$ is a log minimal model.
By the construction of the Minimal Model Program with scaling (see~\cite{BCHM})
all extremal rays of $\psi_j$ are~\mbox{$A$-positive}.
Hence, they are $K$-negative, and so $\psi_j$ is a composition of
$\Gamma$-flips.
Since the \mbox{$\Q$-divisor}~\mbox{$K_{X_j}+t_jA_j$}
is nef, it is a limit of $\Q$-movable 
(and even ample) $\Q$-Cartier $\Q$-divisors by 
Kleiman ampleness criterion.
On the other hand, the varieties $X_j$ are isomorphic in codimension~$1$,
so that the Neron--Severi spaces $\NS^{\W}_{\Q}(X_j)$
are naturally identified with~\mbox{$\NS^{\W}_{\Q}(X)$} 
(cf.~\S\ref{section:minimal-models} below),
and the divisors $K_{X_j}\in\NS^{\W}_{\Q}(X_j)$
correspond to~\mbox{$K_X\in\NS^{\W}_{\Q}(X)$}. 
Therefore, the divisor~$K_{X}$
is also a limit of $\Q$-movable
$\Q$-Cartier $\Q$-divisors, i.\,e.~$X$ is a quasi-minimal model.
\end{proof}

\begin{remark}\label{remark:trivial-group-qm}
In this paper we will use Lemma~\ref{lemma-quasi-minimal-model}
only for a trivial group $\Gamma$ (nevertheless, this will 
allow us to make conclusions about certain non-trivial groups
acting on $X$). Still we prefer to give a more general 
form of the lemma since we believe that it may have other applications.
\end{remark}

Below we will establish an important property of quasi-minimal
models that they share with minimal models.

\begin{proposition}[cf. {\cite[Proof of Theorem~4.1, Step 3]{Birkar2012}}]
Let $X$ be a quasi-minimal model and let $\chi\colon X\dashrightarrow X'$
be any
birational map, where $X'$  has only terminal
singularities. Then $\chi$ does not contract any divisors.
\end{proposition}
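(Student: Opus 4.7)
I plan to argue by contradiction. Assume that $\chi$ contracts some prime divisor $D\subset X$, and choose a common resolution $p\colon W\to X$, $q\colon W\to X'$ of $\chi$ with $W$ smooth. Let $\tilde D\subset W$ be the strict transform of $D$. Then $\tilde D$ is not $p$-exceptional (because $p(\tilde D)=D$ has codimension one in $X$) but is $q$-exceptional (because $q(\tilde D)=\chi(D)$ has codimension at least two in $X'$). Since both $X$ and $X'$ have terminal singularities, the discrepancy formulas give
$$
K_W = p^*K_X + F_p = q^*K_{X'} + F_q,
$$
where $F_p,F_q\ge 0$ are effective divisors with all prime components exceptional for $p$, respectively $q$, and with strictly positive coefficients. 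Subtracting yields $p^*K_X - q^*K_{X'} = F_q - F_p$, and the coefficient of $\tilde D$ on the right-hand side equals the positive discrepancy $a(\tilde D;X')>0$.

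Next I would apply the negativity lemma to the morphism $q$ with the $\Q$-divisor $B := F_p - F_q = q^*K_{X'} - p^*K_X$. Pushing forward by $q$ kills the $q$-exceptional divisor $F_q$, so $q_*B = q_*F_p\ge 0$ automatically. If I can verify that $-B$, equivalently $p^*K_X$, is $q$-nef, the negativity lemma would force $B\ge 0$; but the coefficient of $\tilde D$ in $B$ equals $-a(\tilde D;X')<0$, yielding the desired contradiction.

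Thus the crux of the argument is the $q$-nefness of $p^*K_X$, which is exactly what quasi-minimality of $X$ is designed to furnish. By Definition~\ref{definition:qm} one has $K_X = \lim M_j$ in $\NS^{\W}_{\Q}(X)$ for $\Q$-movable $\Q$-Cartier divisors $M_j$, hence $p^*K_X = \lim p^*M_j$ in $\NS^{\W}_{\Q}(W)$, and it suffices to show that each $p^*M_j$ is $q$-nef. Let $C\subset W$ be a $q$-contracted irreducible curve. If $C$ is also $p$-contracted, then $p^*M_j\cdot C = 0$ by the projection formula. Otherwise $p_*C$ is a nonzero $1$-cycle on $X$, and one has $p^*M_j\cdot C = M_j\cdot p_*C\ge 0$ because $M_j$ is $\Q$-movable: a general member of $|nM_j|$ is an effective divisor not containing $p_*C$, since the base locus of a linear system without fixed components has codimension at least two.

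The main technical subtlety I anticipate is the borderline case in which $p_*C$ happens to lie inside that codimension-$\ge 2$ base locus, where the naive movability argument breaks down; this can be dispatched by a standard perturbation with a small ample divisor to produce strictly positive intersection numbers, in the spirit of the argument of~\cite[Proof of Theorem~4.1, Step~3]{Birkar2012} cited in the statement, after which passing to the limit recovers the required inequality for $p^*K_X$ itself.
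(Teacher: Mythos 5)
Your reduction to the $q$-nefness of $p^*K_X$ is exactly where the argument breaks, and the proposed repair does not close the gap. For a \emph{minimal} model ($K_X$ nef) your negativity-lemma argument is the standard proof that no divisor is contracted; but the whole point of a quasi-minimal model is that $K_X$ is only a limit of $\Q$-movable classes, and a limit of movable classes need not be nef (the movable cone strictly contains the nef cone in general, e.g.\ in the presence of flops). Concretely, a $\Q$-movable divisor $M_j$ can satisfy $M_j\cdot\Gamma<0$ for a curve $\Gamma$ contained in the (codimension $\ge 2$) base locus of $|nM_j|$, and the negativity lemma requires $-B\cdot C\ge 0$ for \emph{every} $q$-contracted curve $C$, including those whose images $p_*C$ sit inside these base loci. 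Adding a small ample divisor $\epsilon A$ changes $M_j\cdot\Gamma$ only by $O(\epsilon)$ and cannot turn a strictly negative intersection number into a non-negative one, and letting $j\to\infty$ does not help either since each term may be negative on the same curve. So the step ``each $p^*M_j$ is $q$-nef'' is simply not available, and no perturbation in the spirit you describe supplies it. (A covering-family weakening of nefness is also not enough, because the negativity lemma genuinely needs nefness on all contracted curves, not just on a general member of a covering family of each bad component.)

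The paper's proof, following Birkar, is built precisely to avoid this nefness input. It pulls the $M_j$ back to a common resolution $Z$ and adds the discrepancy divisor $E\ge 0$ (effective by terminality of $X$) to obtain \emph{effective} divisors $N_j=f^*M_j+E$ converging to $K_Z$ in $\NS^{\W}_{\Q}(Z)$ and chosen so that the strict transform $D_Z$ of $D$ is not a component of $N_j$. It then runs a $K$-MMP over $X'$; since $X'$ is terminal, the end product maps to $X'$ by a small contraction, so the strict transform of $D$ must be contracted by some divisorial step $p_i$. On a covering family of $p_i$-contracted curves of $D^{(i)}$ the divisor $K_{Z_i}$, hence $N_j^{(i)}$ for $j\gg 0$, is strictly negative; an effective divisor negative on a covering family of curves of a prime divisor must contain that divisor in its support, contradicting the choice of $N_j$. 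This ``effectivity plus negativity on a covering family forces containment'' mechanism is the substitute for the global nefness your argument needs but cannot have. You would need to restructure your proof along these lines rather than patch the nefness claim.
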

\begin{proof}
Assume that $\chi$ contracts a (prime) divisor $D$.
Consider a common resolution
\[
\xymatrix{
&Z\ar[dr]^{g}\ar[dl]_{f}&
\\
X\ar@{-->}[rr]^{\chi}&&X'
}
\]
and let $D_Z\subset Z$ be the proper transform of $D$.
Clearly, $D_Z$ is $g$-exceptional.

Take a sequence of $\Q$-movable $\Q$-Cartier $\Q$-divisors $M_j$
whose limit in the Neron--Severi space
$\NS^{\W}_{\Q}(X)$ is $K_X$,
write
$$
K_Z\equiv f^*K_X+E,
$$
and put
$N_j=f^*M_j+E$. Since $X$ has terminal singularities,
the $\Q$-divisor $E$ is effective, and thus the $\Q$-divisor
$N_j$ is also effective. Moreover, $K_Z$ is the 
limit of the $\Q$-divisors~$N_j$
in the Neron--Severi space $\NS^{\W}_{\Q}(Z)$,
and we may assume that
$D_Z$ is not a component of~$N_j$.

Since $g$ is birational, by~\cite[Corollary~1.4.2]{BCHM}
we can run \mbox{$K$-Minimal Model}
Program over $X'$:
$$
Z=Z_1 \dashrightarrow \ldots \dashrightarrow Z_i\overset{p_i}{\dashrightarrow} Z_{i+1}
\dashrightarrow\ldots \dashrightarrow Z_n \longrightarrow X'.
$$
Since $X'$ has only terminal
singularities, the map $Z_n\to X'$ is a small $K$-trivial contraction.
We may assume that the proper transform of $D$ is contracted by $p_i$.
Thus $p_i$ is a morphism whose exceptional locus $D^{(i)}\subset Z_i$
is the proper transform of $D$.

Let $N_j^{(i)}$ be the proper transform of $N_j$ on $Z_i$.
Then $D^{(i)}$ is not a component of $N_j^{(i)}$. Moreover, $K_{Z_i}$
is a limit of $N_j^{(i)}$
in the Neron--Severi space $\NS^{\W}_{\Q}(Z_j)$.
The divisor $K_{Z_i}$ is strictly 
negative on the curves in fibers of $p_i$,
so that $N_j^{(i)}$ is also negative on them for~\mbox{$j\gg 0$}.
Note that one can choose 
an algebraic family of such curves covering $D^{(i)}$.
Thus~$D^{(i)}$ is a component of $N_j^{(i)}$
for $j\gg 0$, which is a contradiction.
\end{proof}

\begin{corollary}[{cf. \cite[Lemma 3.4]{Hanamura1987}}]
\label{corollary:2-qm}
Let $X$ and $Y$ be two quasi-minimal models. 
Then every birational map $\chi\colon X\dasharrow Y$
is an isomorphism in codimension one.
\end{corollary}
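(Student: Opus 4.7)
The plan is to deduce this directly from the preceding proposition by applying it in both directions. By Definition~\ref{definition:qm}, a quasi-minimal model is by assumption a projective variety with terminal singularities. Hence both $X$ and $Y$ satisfy the terminal-singularities hypothesis that appears as the requirement on $X'$ in the statement of the previous proposition, and both independently play the role of the quasi-minimal source.

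First, I would apply the proposition to the map $\chi\colon X\dashrightarrow Y$ itself: since $X$ is a quasi-minimal model and $Y$ has terminal singularities, the proposition yields that $\chi$ contracts no divisor of $X$. Second, I would apply the proposition to the inverse birational map $\chi^{-1}\colon Y\dashrightarrow X$: since $Y$ is also a quasi-minimal model and $X$ has terminal singularities, $\chi^{-1}$ contracts no divisor of $Y$ either.

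To conclude, I would combine these two statements with the following standard fact about birational maps of normal projective varieties. Since $X$ and $Y$ are normal, the indeterminacy loci of $\chi$ and of $\chi^{-1}$ are of codimension at least two, so $\chi$ restricts to an isomorphism between open subsets $U\subset X$ and $V\subset Y$ whose complements consist of the indeterminacy locus together with the image of the exceptional divisors (i.e.\ the divisorial part of the contracted locus). Since neither $\chi$ nor $\chi^{-1}$ contracts a divisor, all these contributions have codimension at least two in $X$ and $Y$ respectively. Therefore $\chi\colon U\to V$ is an isomorphism between open subsets whose complements have codimension $\ge 2$, which is precisely the assertion that $\chi$ is an isomorphism in codimension one.

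I do not anticipate any real obstacle: the only substantive input is the non-contraction statement from the preceding proposition, and the passage from "no divisor contracted in either direction" to "isomorphism in codimension one" is a routine consequence of normality of $X$ and $Y$. The one bookkeeping point to be careful about is to note explicitly that quasi-minimal models have terminal singularities by definition, so that the proposition can legitimately be invoked with the roles of source and target interchanged.
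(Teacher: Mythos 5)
Your proposal is correct and is exactly the intended argument: the paper states this as an immediate corollary of the preceding proposition without writing out a proof, and the deduction is precisely your two applications of the proposition (to $\chi$ and to $\chi^{-1}$, both legitimate since quasi-minimal models have terminal singularities by Definition~\ref{definition:qm}) combined with the standard fact that a birational map of normal projective varieties contracting no divisor in either direction is an isomorphism in codimension one.
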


\section{Groups acting on quasi-minimal models}
\label{section:minimal-models}

Let $X$ be a quasi-minimal model (see Definition~\ref{definition:qm}), and
$G\subset\Bir(X)$ be a finite group.
By Corollary~\ref{corollary:2-qm},
any element $g\in G$ maps $X$ to itself isomorphically in codimension~$1$.
Thus $G$ acts on $\operatorname{Cl}(X)$ and on $\operatorname{Cl}^0(X)$.
Clearly, this induces an action of $G$ on
$\NS^{\W}(X)$, i.\,e. a homomorphism
$$\theta_{NS}\colon G\longrightarrow \Aut\big(\NS^{\W}(X)\big).$$
Moreover, the kernel
$$\operatorname{Ker}(\theta_{NS})\subset G$$
acts on any
algebraic equivalence class $\Cl_L(X)\subset\Cl(X)$
preserving the structure of an algebraic variety
on $\Cl_L(X)$.

\begin{remark}
In the above notation,
assume also that the field $\Bbbk$
is algebraically closed and $X$ is a minimal model. 
Then, according to~\cite[Theorem~3.3(1)]{Hanamura1987},
the group~\mbox{$\Bir(X)$} has
a natural structure of a group scheme.
Using this one can define an action of the whole group~\mbox{$\Bir(X)$}
on~\mbox{$\Cl(X)$} and~\mbox{$\Cl^0(X)$}. Since we are interested only in
finite group actions, we do not need these constructions,
and take an advantage of a more elementary approach that also does not
need additional assumptions on $\Bbbk$.
\end{remark}

\begin{lemma}\label{lemma-minimal-model}
Let $X$ be a quasi-minimal model.
Let $L$ be an ample $\Q$-Cartier divisor on~$X$. Then
the group $\Bir(X, L)$ of birational automorphisms of $X$
that preserve the class~\mbox{$[L]\in \Pic(X)$} is finite.
\end{lemma}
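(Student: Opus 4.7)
The plan is to show that every $g\in\Bir(X,L)$ extends to a regular automorphism of $X$ and then realize $\Bir(X,L)=\Aut(X,L)$ as a closed subgroup of a projective linear group whose identity component must vanish.

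First I would pick $n>0$ so that $nL$ is a very ample Cartier divisor, giving a closed embedding $\iota\colon X\hookrightarrow\P(V)$ with $V=H^0(X,nL)^*$. For any $g\in\Bir(X,L)$, Corollary~\ref{corollary:2-qm} shows that $g$ is an isomorphism of $X$ in codimension one; combined with $g^*[nL]=[nL]$ and the normality of $X$, this yields an isomorphism of line bundles $g^*(nL)\simeq nL$, unique up to scalar. Hence $g$ induces a linear automorphism of $H^0(X,nL)$ well-defined up to scalar, and thus a projective linear map $\sigma\in\PGL(V)$. Because $\sigma$ agrees with $g$ on a dense open subset of $\iota(X)$ and $\iota(X)$ is closed in $\P(V)$, the map $\sigma$ must carry $\iota(X)$ onto itself. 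Consequently $g$ extends to a biregular automorphism of $X$, and the resulting homomorphism $\Bir(X,L)\to\PGL(V)$ is injective with image a closed algebraic subgroup of finite type.

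To finish I would prove that this algebraic group has trivial identity component $\Aut^0(X,L)$. By Remark~\ref{remark:qm-non-uniruled} the variety $X$ is non-uniruled, so $\Aut^0(X,L)$ cannot contain a subgroup isomorphic to $\mathbb{G}_a$ or $\mathbb{G}_m$: any such subgroup would act faithfully on $X$, and the closures of its one-dimensional orbits would form a covering family of rational curves on $X$. Chevalley's structure theorem for connected algebraic groups in characteristic zero then forces $\Aut^0(X,L)$ to be an abelian variety. But $\PGL(V)$ is an affine algebraic group, and an abelian variety admits no nonconstant morphism to an affine variety, so $\Aut^0(X,L)$ must be trivial, and the finiteness of $\Bir(X,L)$ follows. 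The principal delicate step is the passage from birational to biregular, where quasi-minimality is essential through Corollary~\ref{corollary:2-qm} (without it one could not rule out divisorial indeterminacy, and sections of $nL$ would not pull back); the rest is a routine combination of non-uniruledness with the affineness of the projective linear group.
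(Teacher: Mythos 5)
Your proof is correct and follows essentially the same route as the paper: pass to a very ample multiple of $L$, use quasi-minimality to make $g$ an isomorphism in codimension one so that it acts on $H^0(X,nL)$ and hence extends to a biregular automorphism inside $\PGL$, then rule out positive dimension because a $\mathbb{G}_{\mathrm a}$- or $\mathbb{G}_{\mathrm m}$-action would sweep out rational curves, contradicting non-uniruledness of a quasi-minimal model. The only (harmless) difference is your detour through Chevalley's theorem, which is unnecessary since $\Aut^0(X,L)$ is already affine as a closed subgroup of $\PGL(V)$.
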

\begin{proof}
We may assume that $L$ is a very ample Cartier divisor.
Suppose that some element~\mbox{$\varphi\in\Bir(X)$} preserves the class
$[L]\in \Pic(X)\subset \operatorname{Cl}(X)$.
Since
$$X\simeq\operatorname{Proj}\bigoplus_{n\ge 0} H^0(X, nL),$$
the map $\varphi$ is in fact a biregular automorphism of $X$.
Therefore, $\Bir(X, L)$ is a subgroup of the group of
linear transformations of the projective space
$$\mathbb P\big(H^0(X,L)^\vee\big)\simeq\P^N,$$
so that
$$\Bir(X, L)\subset\PGL_{N+1}(\Bbbk).$$
If $\Bir(X,L)$ is not finite, then it contains a one-parameter
subgroup $G$, where $G\simeq \mathbb G_{\mathrm m}$
or~\mbox{$G\simeq\mathbb{G}_{\mathrm a}$}.
In particular, for a general point $P\in X$ the orbit $G\cdot P$
must be a geometrically
rational curve, so that $X$ is uniruled.
This contradicts the fact that a quasi-minimal model is non-uniruled
(see Remark~\ref{remark:qm-non-uniruled}).
\end{proof}

\section{Proof of Theorem~\ref{theorem:main}}
\label{section:proofs}

In this section we prove
Proposition~\ref{proposition:technical}, which is our main auxiliary
result describing the general structure of finite groups
of birational automorphisms, and use it to derive
Theorem~\ref{theorem:main}.

Recall that to any variety $X$
one can associate \emph{the maximal rationally connected fibration}
$$\phi_{\RC}\colon X\dasharrow X_{\nun},$$
which is a canonically defined rational map with
rationally connected fibers and non-uniruled base $X_{\nun}$
(see~\cite[\S IV.5]{Kollar-1996-RC},
\cite[Corollary 1.4]{Graber-Harris-Starr-2003}).

\begin{definition}\label{definition:polarization}
Let $X$ be a variety.
By \emph{a birational polarization on the base of the maximal
rationally connected fibration of $X$} we mean an ample divisor on
one of the quasi-minimal models (see Definition~\ref{definition:qm})
of the base $X_{\nun}$ of
the maximal rationally connected fibration~$\phi_{\RC}$.
\end{definition}

\begin{proposition}\label{proposition:technical}
Let $X$ be a variety of dimension $n$,
and let $G\subset\Bir(X)$ be
a finite subgroup.
Choose some birational polarization $L$ on the base of the maximal
rationally connected fibration of~$X$.
Then there exist exact sequences
\begin{equation}\label{seq1}
1\longrightarrow G_{\RC}\longrightarrow G\longrightarrow G_{\nun}\longrightarrow 1
\end{equation}
\begin{equation}\label{seq2}
1\longrightarrow G_{\alg}\longrightarrow G_{\nun}\longrightarrow G_{N}\longrightarrow 1
\end{equation}
\begin{equation}\label{seq3}
1\longrightarrow G_{L} \longrightarrow G_{\alg}\longrightarrow G_{\ab}\longrightarrow 1
\end{equation}
with the following properties
\begin{itemize}
\item[(i)] the group $G_{\nun}$ is a subgroup
of a group $\Bir(X_{\nun})$ for some quasi-minimal model~$X_{\nun}$
of dimension at most $n$
that depends only on $X$ (but not on the subgroup~$G$);
\item[(ii)] the group $G_{\RC}$ is a subgroup
of a group $\Bir(X_{\RC})$ for some rationally connected
variety $X_{\RC}$ of dimension at most $n$ defined
over the field $\Bbbk(X_{\nun})$;
\item[(iii)] the group $G_{N}$ is a subgroup
of $\Aut(N)$ for the finitely 
generated abelian group~\mbox{$N=\NS^{\W}(X_{\nun})$}
that depends only on $X$;
\item[(iv)] the group $G_{\alg}$
acts (maybe not faithfully) on each of the algebraic
equivalence classes of Weil divisors on $X_{\nun}$;
\item[(v)]
the group $G_{\ab}$ is a
subgroup of a group $\Aut_{g}(A)$
(see Definition~\ref{definition:Aut-g}),
where $A$ is an abelian variety of dimension $q(X_{\nun})= q(X)$;
\item[(vi)] the group $G_{L}$ is a subgroup
of a group $\Bir(X_{\nun})$ that
preserves the class $L$ (cf.~\S\ref{section:minimal-models}).
\end{itemize}
\end{proposition}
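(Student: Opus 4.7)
The plan is to build the three exact sequences in the order listed, using the maximal rationally connected fibration, the action on the Neron--Severi group, and the action on the algebraic equivalence class of~$L$.

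First I would fix the quasi-minimal model $X_{\nun}$ once and for all, independently of~$G$: start with any smooth projective model of the base of $\phi_{\RC}\colon X\dashrightarrow X_{\nun}$, which is non-uniruled, and apply Lemma~\ref{lemma-quasi-minimal-model} with trivial group (cf.~Remark~\ref{remark:trivial-group-qm}) to obtain a quasi-minimal model birational to the original base (still denoted $X_{\nun}$). Since $\phi_{\RC}$ is canonically determined by $X$, the action of $G\subset\Bir(X)$ on $\Bbbk(X)$ preserves the subfield $\Bbbk(X_{\nun})\subset\Bbbk(X)$, yielding a homomorphism $G\to\Bir(X_{\nun})$. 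Its image $G_{\nun}$ and kernel $G_{\RC}$ fit into~\eqref{seq1}, giving~(i); and $G_{\RC}$ acts faithfully on the generic fiber $X_{\RC}$ of $\phi_{\RC}$, which is rationally connected of dimension at most $n$ and defined over $\Bbbk(X_{\nun})$, giving~(ii).

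For~\eqref{seq2} I would use the action on divisor classes. By Corollary~\ref{corollary:2-qm} every element of $\Bir(X_{\nun})$ is an isomorphism in codimension one, so it acts on $\Cl(X_{\nun})$ and $\Cl^0(X_{\nun})$, and hence on the finitely generated abelian group $N=\NS^{\W}(X_{\nun})$. Let $G_N$ be the image and $G_{\alg}$ the kernel of the resulting homomorphism $G_{\nun}\to\Aut(N)$; this yields~\eqref{seq2}, property~(iii), and, since $G_{\alg}$ fixes every class in $N$, also property~(iv).

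Finally, for~\eqref{seq3}, the birational polarization $L$ (transferred, via Corollary~\ref{corollary:2-qm}, from the quasi-minimal model on which it is ample to a class on $X_{\nun}$) has an algebraic equivalence class $[L]\subset\Cl(X_{\nun})$, which is a torsor over the abelian variety $A=\Cl^0(X_{\nun})$. The dimension of $A$ is $q(X_{\nun})$, and $q(X_{\nun})=q(X)$ because the fibers of $\phi_{\RC}$ are rationally connected and hence have vanishing irregularity. Fixing any basepoint identifies the torsor $[L]$ with $A$, so the action of $G_{\alg}$ on $[L]$ factors through $\Aut_{g}(A)\simeq A(\Bbbk)\rtimes\Gamma$ (Remark~\ref{remark:Aut-g}). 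Taking $G_{\ab}$ to be the image and $G_L$ the kernel of $G_{\alg}\to\Aut_{g}(A)$ produces~\eqref{seq3} and property~(v); elements of $G_L$ are precisely those that fix the class $L$, giving~(vi).

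The main obstacle is administrative rather than substantial: one has to verify that the MRC base admits a quasi-minimal model chosen once and for all, that the canonicity of $\phi_{\RC}$ really gives a well-defined $G$-action on this $X_{\nun}$, and that the class $L$, originally given on a possibly different quasi-minimal model, transfers in a $G_{\alg}$-equivariant way to our chosen $X_{\nun}$. Once these identifications are in place, all three sequences are kernel--image factorizations of natural homomorphisms.
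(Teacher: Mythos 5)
Your proposal is correct and follows essentially the same route as the paper: the three sequences arise exactly as the kernel--image factorizations of $G\to\Bir(X_{\nun})$ (via functoriality of the MRC fibration), of $G_{\nun}\to\Aut\big(\NS^{\W}(X_{\nun})\big)$, and of the action of $G_{\alg}$ on the torsor $\Cl_L(X_{\nun})$ over $\Cl^0(X_{\nun})$, with the quasi-minimal model supplied by Lemma~\ref{lemma-quasi-minimal-model} and the codimension-one identifications by Corollary~\ref{corollary:2-qm}. The only nitpick is that $G_L$ is the kernel of the action on the whole class $\Cl_L(X_{\nun})$, which is contained in (rather than equal to) the stabilizer of $L$ --- but that containment is all that property~(vi) requires.
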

\begin{proof}
Let
$$\phi_{\RC}\colon X\dasharrow X_{\nun}$$
be the maximal
rationally connected fibration.
Choose~\mbox{$G_{\RC}\subset G$} to be the maximal subgroup
acting fiberwise with respect to $\phi_{\RC}$.
Let~$X_{\RC}$ be the fiber of $\phi_{\RC}$ over the generic
scheme point of $X_{\nun}$.
Since the maximal rationally connected fibration is
functorial (see~\cite[Theorem~IV.5.5]{Kollar-1996-RC}), the group~$G_{\RC}$ 
acts by birational transformations of~$X_{\RC}$.
Furthermore, the group
$$G_{\nun}=G/G_{\RC}$$
acts by birational
transformations of $X_{\nun}$.
Note that $X_{\RC}$ is a rationally connected variety
over the field~$\Bbbk(X_{\nun})$,
and the variety $X_{\nun}$ is non-uniruled.
We may assume
that~$X_{\nun}$ is a quasi-minimal model (on which
the group~$G_{\nun}$ still acts by birational transformations),
and $L$ is an ample divisor
class on~$X_{\nun}$.
In particular, we have established the exact
sequence~\eqref{seq1} and proved~(i) and~(ii).

Consider the action of $G_{\nun}$ on the group of Weil divisors
$\Cl(X_{\nun})$ and on
the Neron--Severi group $\NS^{\W}(X_{\nun})$ (see~\S\ref{section:minimal-models}).
Let $G_{\alg}\subset G_{\nun}$ be the kernel
of this action. In particular,
the action of $G_{\alg}$ on $\Cl(X_{\nun})$ preserves
each of the algebraic equivalence classes of Weil divisors
on $X_{\nun}$.
Moreover, the group
$$G_{N}=G_{\nun}/G_{\alg}$$
is a subgroup
of the automorphism group of the finitely generated
abelian group~$\NS^{\W}(X_{\nun})$.
Therefore, we have established the exact
sequence~\eqref{seq2} and proved~(iii) and~(iv).

Denote by
$$\Cl_L(X_{\nun})\subset\Cl(X_{\nun})$$
the class of
algebraic equivalence of the divisor~\mbox{$L\in\Cl(X_{\nun})$}.
Recall that $\Cl_L(X_{\nun})$ has a structure of an algebraic variety,
so that $\Cl_L(X_{\nun})$ is a torsor over an abelian variety~$\Cl^0(X)$.
Moreover, the group $G_{\alg}$ acts by automorphisms of the variety
$\Cl_L(X_{\nun})$ (see~\S\ref{section:minimal-models}).
Let $G_{L}\subset G_{\alg}$ be the kernel
of the action of $G_{\alg}$ on $\Cl_L(X_{\nun})$.
In particular, the group $G_{L}$ preserves the class $L$.
Moreover,
the group
$$G_{\ab}=G_{\alg}/G_{L}$$
is a subgroup of~\mbox{$\Aut_{g}\big(\Cl_L(X_{\nun})\big)$}.
Therefore, we have established the exact
sequence~\eqref{seq3} and proved~(v) and~(vi).
\end{proof}

\begin{remark}
Note that in the proof of Proposition~\ref{proposition:technical}
in the case of an algebraically closed field $\Bbbk$
one can avoid using the field $\Bbbk(X_{\nun})$ and choose
$X_{\RC}$ to be a fiber of $\phi_{\RC}$ over a general \emph{closed}
point of $X_{\nun}$. Still in the general case
passing to the field $\Bbbk(X_{\nun})$ looks inevitable
(at least in our approach) since the base $X_{\nun}$ may have
no $\Bbbk$-points at all.
\end{remark}

\begin{remark}
The choice of a birational polarization $L$ on the base of the maximal
rationally connected fibration of~$X$ in
Proposition~\ref{proposition:technical} is auxiliary, and
the main properties of the groups we are going to consider
will not depend on this choice (although the particular
groups arising in the exact sequences~\ref{seq1}, \ref{seq2} and~\ref{seq3}
may depend on $L$).
\end{remark}

\begin{corollary}\label{corollary:technical}
Let $X$ be a variety of dimension $n$.
Let $\G_{\RC}(X)$ and $\G_{\ab}(X)$ be the families of groups
arising in Proposition~\ref{proposition:technical}
as the groups $G_{\RC}$ and $G_{\ab}$, respectively,
for various choices of finite groups $G\subset\Bir(X)$.
Then
\begin{itemize}
\item[(i)]
the group $\Bir(X)$
has bounded finite subgroups provided that $\G_{\RC}(X)$ and $\G_{\ab}(X)$
have uniformly bounded finite subgroups;
\item[(ii)]
the group $\Bir(X)$ is Jordan provided that
$\G_{\ab}(X)$ is uniformly
Jordan and $\G_{\RC}(X)$ has uniformly bounded finite subgroups;
\item[(iii)]
the group $\Bir(X)$ is Jordan provided that
$\G_{\ab}(X)$ has uniformly bounded finite subgroups and $\G_{\RC}(X)$
is uniformly Jordan.
\end{itemize}
\end{corollary}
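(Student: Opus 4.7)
The plan is to apply Proposition~\ref{proposition:technical} and chain the three exact sequences~\eqref{seq1},~\eqref{seq2},~\eqref{seq3} together with the extension lemmas of~\S\ref{section:basics}. First I would observe that two of the five pieces appearing in these sequences, namely $G_L$ and $G_N$, automatically have uniformly bounded order regardless of the hypothesis of the corollary. Indeed, by part~(vi) of Proposition~\ref{proposition:technical} the group $G_L$ sits inside $\Bir(X_{\nun},L)$, which is finite by Lemma~\ref{lemma-minimal-model}; and by part~(iii) the group $G_N$ sits inside $\Aut(N)$ for the finitely generated abelian group $N=\NS^{\W}(X_{\nun})$ that depends only on $X$, so Corollary~\ref{corollary:lattice} gives uniform boundedness of the corresponding family of $G_N$.

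For~(i), I would assemble three applications of Lemma~\ref{lemma:BFS-by-BFS}: apply it to~\eqref{seq3} using that both $G_L$ and $G_{\ab}$ come from families with uniformly bounded finite subgroups to conclude the same for the family of $G_{\alg}$; apply it to~\eqref{seq2} using the bound on $G_N$ to pass to uniform boundedness of $G_{\nun}$; and a final application to~\eqref{seq1} with the hypothesis on $\G_{\RC}(X)$ yields the conclusion. Case~(iii) runs along the same lines: the boundedness of $G_L$, $G_{\ab}$ and $G_N$ still forces $G_{\nun}$ to have uniformly bounded finite subgroups through two applications of Lemma~\ref{lemma:BFS-by-BFS}, and then~\eqref{seq1} together with Lemma~\ref{lemma:Jordan-by-BFS} (applied with $\G_{\RC}(X)$ uniformly Jordan and the family of $G_{\nun}$ uniformly bounded) gives that $\Bir(X)$ is Jordan.

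The main obstacle is~(ii), where $\G_{\ab}(X)$ is assumed only Jordan rather than bounded, so the bounded factors appear on the ``wrong side'' of~\eqref{seq3} for Lemma~\ref{lemma:Jordan-by-BFS} to be directly applicable. Here I would use Lemma~\ref{lemma:Jordan-by-BFS-with-BR} instead, which costs us an extra hypothesis that the Jordan factor also has finite subgroups of uniformly bounded rank. The crucial input is Corollary~\ref{corollary:abelian-variety-Jordan}, which guarantees that the family $\AA_d$ of all $\Aut_{g}(A)$ not only is uniformly Jordan but also has finite subgroups of uniformly bounded rank; in particular $\G_{\ab}(X)$ inherits this bounded rank property. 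Applying Lemma~\ref{lemma:Jordan-by-BFS-with-BR} to~\eqref{seq3} shows that the family of $G_{\alg}$ is uniformly Jordan, and Lemma~\ref{lemma:BR-by-BR} combined with the boundedness of $G_L$ keeps the bounded rank property for $G_{\alg}$. Next,~\eqref{seq2} together with boundedness of $G_N$ feeds into Lemma~\ref{lemma:Jordan-by-BFS} to yield that $G_{\nun}$ is Jordan, while Lemma~\ref{lemma:BR-by-BR} again preserves bounded rank. A final application of Lemma~\ref{lemma:Jordan-by-BFS-with-BR} to~\eqref{seq1} (with $\G_{\RC}(X)$ uniformly bounded and the family of $G_{\nun}$ uniformly Jordan of uniformly bounded rank) delivers the Jordan property for $\Bir(X)$. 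Essentially the whole argument is bookkeeping of the extension lemmas along the tower built by Proposition~\ref{proposition:technical}; the only genuinely geometric ingredient beyond that proposition is Corollary~\ref{corollary:abelian-variety-Jordan}, needed to unlock Lemma~\ref{lemma:Jordan-by-BFS-with-BR} in case~(ii).
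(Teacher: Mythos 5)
Your proposal is correct and follows essentially the same route as the paper: the paper's proof likewise notes that $\G_{L}(X)$ and $\G_{N}(X)$ have uniformly bounded finite subgroups (by Lemma~\ref{lemma-minimal-model} and Corollary~\ref{corollary:lattice}) and then chains the three exact sequences using Lemma~\ref{lemma:BFS-by-BFS} for (i), Lemmas~\ref{lemma:BR-by-BR}, \ref{lemma:Jordan-by-BFS-with-BR}, \ref{lemma:Jordan-by-BFS} and Corollary~\ref{corollary:abelian-variety-Jordan} for (ii), and Lemmas~\ref{lemma:BFS-by-BFS} and~\ref{lemma:Jordan-by-BFS} for (iii). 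Your write-up merely makes explicit the bookkeeping that the paper leaves implicit, including the correct use of the bounded-rank property of $\G_{\ab}(X)$ to unlock Lemma~\ref{lemma:Jordan-by-BFS-with-BR} in case~(ii).
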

\begin{proof}
Choose some birational polarization $L$ on the base of the maximal
rationally connected fibration of $X$.
Let $\G_{N}(X)$ and $\G_{L}(X)$ be the families of groups
arising in Proposition~\ref{proposition:technical}
as the groups $G_{N}$ and $G_{L}$, respectively,
for various choices of finite groups~\mbox{$G\subset\Bir(X)$}.
Then the family $\G_{N}$ has uniformly bounded
finite subgroups by Corollary~\ref{corollary:lattice},
and $\G_{L}(X)$ has uniformly bounded
finite subgroups by Lemma~\ref{lemma-minimal-model}.
Therefore, assertion~(i) follows from Proposition~\ref{proposition:technical}
and Lemma~\ref{lemma:BFS-by-BFS}.
Assertion~(ii) follows from
Proposition~\ref{proposition:technical},
Lemmas~\ref{lemma:BR-by-BR}, 
\ref{lemma:Jordan-by-BFS-with-BR} and~\ref{lemma:Jordan-by-BFS}
and Corollary~\ref{corollary:abelian-variety-Jordan}.
Finally, assertion~(iii)
follows from Proposition~\ref{proposition:technical}
and Lemmas~\ref{lemma:BFS-by-BFS} and~\ref{lemma:Jordan-by-BFS}.
\end{proof}

\begin{remark}[{cf.~\cite[Theorem~1.10]{Prokhorov-Shramov}}]
\label{remark:BR}
Arguing as in the proof of Corollary~\ref{corollary:technical},
one can easily show that if $X$ is non-uniruled,
then the group $\Bir(X)$ has finite subgroups of bounded rank.
Moreover, these arguments together with~\cite[Theorem~4.2]{Prokhorov-Shramov}
show that modulo Conjecture~BAB the same assertion holds
for an arbitrary variety~$X$.
\end{remark}

Now we are ready to prove Theorem~\ref{theorem:main}.

\begin{proof}[{Proof of Theorem~\ref{theorem:main}}]
Let $\G_{\RC}(X)$ and $\G_{\ab}(X)$
be the families of groups
defined in Corollary~\ref{corollary:technical}.
By Theorem~\ref{theorem:RC-Jordan} the family
$\G_{\RC}(X)$ is uniformly Jordan.
By Corollary~\ref{corollary:abelian-variety-Jordan}
the family $\G_{\ab}(X)$ is uniformly Jordan.
Moreover, $\G_{\RC}(X)$ consists of trivial groups
(and thus has uniformly bounded finite subgroups)
provided that the variety $X$ is non-uniruled,
and~$\G_{\ab}(X)$ consists of trivial groups
(and thus has uniformly bounded finite subgroups)
provided that $q(X)=0$.
Now the assertions (i), (ii) and (iii)
of Theorem~\ref{theorem:main} are implied by
the assertions (i), (ii) and (iii)
of Corollary~\ref{corollary:technical},
respectively.
\end{proof}

\section{Proof of Theorem~\ref{theorem:Q}}
\label{section:Q}

In this section we use Proposition~\ref{proposition:technical}
to prove Theorem~\ref{theorem:Q}, and derive
Corollary~\ref{corollary:Serre-OK}.

\begin{lemma}\label{lemma:GL-uniform}
Suppose that $\Bbbk$ is a finitely generated field over $\Q$,
and $N$ is a positive integer. Let~$\G$ be the family of groups
$\GL_N(\K)$, where $\K$ varies in the set of finitely generated
fields over~$\Bbbk$ such that $\Bbbk$ is algebraically closed
in $\K$. Then the family $\G$ has uniformly bounded finite subgroups.
\end{lemma}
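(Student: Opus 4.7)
The plan is to bound $|G|$ uniformly for any finite subgroup $G \subset \GL_N(\K)$ in the family, in two steps: first bound the exponent of $G$ using the hypothesis that $\Bbbk$ is algebraically closed in $\K$, then bound the order of $G$ in terms of its exponent by combining Jordan's theorem for $\GL_N$ with the elementary bound on finite abelian subgroups of $\GL_N$ over an algebraically closed field.

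For the exponent bound, I would take $g \in G$ of finite order $n$. Since the eigenvalues of $g$ in $\bar\K$ are roots of unity and hence algebraic over $\Q$, the coefficients of the characteristic polynomial $p(X) \in \K[X]$ of $g$ are algebraic over $\Q$ as well; as $\Bbbk \supset \Q$ is algebraically closed in $\K$, they lie in $\Bbbk$, so $p \in \Bbbk[X]$. Set $\Bbbk_0 := \Bbbk \cap \bar\Q$; this is a number field because $\Bbbk$ is finitely generated over $\Q$. Since $\Bbbk_0$ is algebraically closed in $\Bbbk$, the extension $\Bbbk/\Bbbk_0$ is regular, so $\Bbbk$ is linearly disjoint from any algebraic extension of $\Bbbk_0$; thus for any primitive $d$-th root of unity $\zeta$ appearing as a root of $p$, one has $[\Bbbk(\zeta):\Bbbk] = [\Bbbk_0(\zeta):\Bbbk_0]$, and this quantity is at most $N$ because the minimal polynomial of $\zeta$ over $\Bbbk$ divides $p$. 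This yields $\varphi(d) \le N \cdot [\Bbbk_0:\Q]$, bounding $d$, and hence bounding $n = \operatorname{lcm}(\text{orders of eigenvalues of }g)$ by a constant $e = e(N,\Bbbk)$ independent of $\K$.

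For the order bound, since $\GL_N$ over any field of characteristic zero is Jordan (as recalled in \S\ref{section:intro}), $G$ has a normal abelian subgroup $A$ of index at most some $J = J(N)$. The finite abelian group $A$ is simultaneously diagonalizable over $\bar\K$, so it embeds into $(\bar\K^{\ast})^N$; since $\exp(A) \le e$, its image lies in $\mu_e^N$, yielding $|A| \le e^N$, and therefore $|G| \le J \cdot e^N$, a bound depending only on $N$ and $\Bbbk$. The only real obstacle is the field-theoretic step showing that the characteristic polynomial descends to $\Bbbk$ and that degrees transfer to the number field $\Bbbk_0$ via regularity of $\Bbbk/\Bbbk_0$; the remaining linear algebra and invocation of Jordan's theorem are standard.
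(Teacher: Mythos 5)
Your proof is correct, and while its skeleton matches the paper's --- Jordan's theorem for $\GL_N$ in characteristic zero to pass to a finite abelian subgroup of bounded index, simultaneous diagonalization over $\bar{\K}$ to bound that subgroup by $e^N$ once element orders are bounded by $e$ --- the mechanism you use for the element-order bound is genuinely different. The paper works with the minimal polynomial $F_g(u)$ over $\K$ and the factorization $u^{\ord(g)}-1=\prod_{l\mid\ord(g)}\Phi_l(u)$, arguing that each cyclotomic factor that occurs either is irreducible (hence has degree $\varphi(l)\le N$) or contributes a root of unity lying in $\Bbbk$, of which there are only finitely many because $\Bbbk$ is finitely generated. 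You instead descend the characteristic polynomial to $\Bbbk$ (its coefficients are algebraic over $\Q$ and $\Bbbk$ is algebraically closed in $\K$) and then exploit regularity of $\Bbbk/\Bbbk_0$ over the number field $\Bbbk_0=\Bbbk\cap\bar{\Q}$ to transfer degrees via linear disjointness, obtaining $\varphi(d)\le N\cdot[\Bbbk_0:\Q]$ directly. This is essentially the Minkowski--Serre argument; indeed Remark~\ref{remark:Serre} points out that the lemma also follows from \cite[Theorem~5]{Serre2009} by precisely the observation underlying your step, namely that the relevant cyclotomic invariants of $\Bbbk$ and $\K$ coincide when $\Bbbk$ is algebraically closed in $\K$. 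Your route buys a cleaner field-theoretic reduction that avoids any case analysis of how $\Phi_l$ factors over $\K$ (over a general field a cyclotomic polynomial need be neither irreducible nor totally split), at the modest cost of invoking the linear-disjointness characterization of regular extensions; both arguments yield a bound depending only on $N$ and $\Bbbk$.
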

\begin{proof}
To start with, the family $\G$ is uniformly Jordan. Indeed, any finite
subgroup of $\GL_N(\K)$ is embeddable into, say, $\GL_N(\C)$, so that
the constants appearing in Definition~\ref{definition:uniformly-Jordan}
for the groups $\GL_N(\K)$ are all bounded by the corresponding constant
for~$\GL_N(\C)$.
Therefore, replacing a finite subgroup $G\subset\GL_N(\K)$
by its abelian subgroup of bounded index if necessary, we are left with the
task to bound the order of finite abelian subgroups
of $\GL_N(\K)$.

Suppose that $G\subset\GL_N(\K)$ is a finite abelian subgroup.
Then all elements of $G$ are simultaneously diagonalizable
over the algebraic closure $\bar{\K}$ of the field $\K$.
Thus $G$ is generated by at most $N$ elements.
Therefore, it is enough to show that the orders of the elements
of $G$ are bounded by some constant that does not depend on $\K$ and $G$.

Let $g\in\GL_N(\K)$ be an element of finite order $\ord(g)$.
We claim that $\ord(g)$ is bounded by a constant that depends only on
the field $\Bbbk$ and the integer $N$, but not on the field~$\K$. Indeed,
let $F_g(u)$ be the minimal polynomial of the element $g\in\GL_N(\K)$.
Then~$F_g(u)$ is a polynomial of degree at most $N$
with coefficients from the field $\K$, and~\mbox{$F_g(u)$}
divides~\mbox{$u^{\ord(g)}-1$}. Our goal is to prove 
that the roots of all possible polynomials~$F_g(u)$ 
form a finite set of elements of~$\bar{\Bbbk}$. This will imply that 
the set of possible eigen-values of elements of finite order in~$\GL_N(\K)$
is bounded, which means boundedness of orders of such elements.

Let $\Phi_l(u)$ be the $l$-th cyclotomic polynomial.
Recall that $\Phi_l(u)$ is defined over $\mathbb{Z}$.
Moreover, $\Phi_l(u)$ is either irreducible over $\K$, or is a product of linear
polynomials over $\K$.
As usual, one has
\begin{equation}\label{eq:cyclotomic}
u^{\ord(g)}-1=\prod\limits_{l\, \mid\,  \ord(g)}\Phi_l(u).
\end{equation}

Let $G(u)$ be a non-linear irreducible polynomial over $\Bbbk$
that divides $F_g(u)$. Since~$\Bbbk$ is algebraically closed in $\K$,
we see that $G(u)$ is also irreducible over $\K$.
Note that $G(u)$ coincides with some irreducible polynomial $\Phi_l(u)$.
One has
$$\varphi(l)=\deg\big(\Phi_l(u)\big)=\deg\big(G(u)\big)\le
\deg\big(F_g(u)\big)\le N,$$
where $\varphi(l)$ is the Euler function of $l$.
Therefore, the numbers $l$ appearing for the \emph{irreducible}
polynomials $\Phi_l(u)$ in~\eqref{eq:cyclotomic}
are bounded in terms of $N$. On the other hand,
if~$\Phi_l(u)$ is reducible over $\K$, then $\K$ (and thus also $\Bbbk$)
contains a primitive root of unity
of degree~$l$. Hence the number of polynomials
like this appearing in~\eqref{eq:cyclotomic} is bounded by some constant
(that depends only on $\Bbbk$) because the field $\Bbbk$ is
finitely generated.
Since the polynomial~\mbox{$u^{\ord(g)}-1$}, and thus also 
the polynomial~$F_g(u)$, 
has no multiple roots, we conclude that~$F_g(u)$ 
is a product of a bounded number
of cyclotomic polynomials of bounded degrees.
Therefore, only a finite number of elements of~$\bar{\Bbbk}$
can be roots of $F_g(u)$ for various $g$, 
and the assertion of the lemma follows.
\end{proof}

\begin{remark}
If $X$ is a (geometrically irreducible) variety
over a field $\Bbbk$, then $\Bbbk$
is algebraically closed in $\K=\Bbbk(X)$.
\end{remark}

An immediate consequence of Lemma~\ref{lemma:GL-uniform} is the following.

\begin{corollary}\label{corollary:alg-group-uniform}
Suppose that $\Bbbk$ is a finitely generated field over $\Q$, and
$\Gamma$ is a linear algebraic group.
Let $\G$ be the family of groups
$\Gamma(\K)$, where $\K$ varies in the set of finitely generated
fields over $\Bbbk$ such that $\Bbbk$ is algebraically closed
in $\K$. Then the family $\G$ has uniformly bounded finite subgroups.
\end{corollary}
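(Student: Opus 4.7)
The plan is to reduce this statement immediately to Lemma~\ref{lemma:GL-uniform} by using the defining property of a linear algebraic group. First I would invoke the fact that a linear algebraic group $\Gamma$ (which in this setup is understood to be defined over $\Bbbk$, possibly after passing to the smallest field of definition, which is contained in $\Bbbk$) admits, by definition, a faithful representation as a closed subgroup of some $\GL_N$. This embedding is defined over $\Bbbk$ and therefore induces, for every field extension $\K/\Bbbk$, an injective homomorphism
$$
\Gamma(\K)\hookrightarrow \GL_N(\K).
$$

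Second, I would apply Lemma~\ref{lemma:GL-uniform} to the integer $N$ and the base field $\Bbbk$: the family $\{\GL_N(\K)\}$, as $\K$ ranges over finitely generated extensions of $\Bbbk$ with $\Bbbk$ algebraically closed in $\K$, has uniformly bounded finite subgroups, with some bound $B=B(\Bbbk,N)$. Since every finite subgroup of $\Gamma(\K)$ embeds, via the displayed inclusion, into a finite subgroup of $\GL_N(\K)$, its order is bounded by $B$ as well. Hence the family $\G$ has uniformly bounded finite subgroups, with a bound depending only on $\Bbbk$ and $\Gamma$ (through the choice of the embedding, i.e.\ through $N$).

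There is no real obstacle here beyond noting that the faithful linear representation can be chosen over the base field $\Bbbk$, so that the bound in Lemma~\ref{lemma:GL-uniform}, which is already uniform in $\K$, can be transferred verbatim to $\Gamma(\K)$. If one wanted to be slightly more careful about the field of definition of $\Gamma$ (should it be smaller than $\Bbbk$), one could simply replace $\Bbbk$ in Lemma~\ref{lemma:GL-uniform} by the field of definition of $\Gamma$ together with any auxiliary data needed for the embedding, which is still a finitely generated field over $\Q$, and apply the lemma there; the conclusion for $\Bbbk$ then follows trivially.
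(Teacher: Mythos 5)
Your proof is correct and follows exactly the route the paper intends: the paper states this corollary as ``an immediate consequence'' of Lemma~\ref{lemma:GL-uniform}, with precisely the embedding $\Gamma(\K)\hookrightarrow\GL_N(\K)$ coming from a faithful linear representation of $\Gamma$ in mind. Your additional remark about the field of definition is a reasonable bit of care that the paper leaves implicit.
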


\begin{remark}\label{remark:Serre}
Lemma~\ref{lemma:GL-uniform} and Corollary~\ref{corollary:alg-group-uniform}
are also implied by~\cite[Theorem~5]{Serre2009}. Indeed, in the notation
of~\cite[\S 4]{Serre2009}
the values of the invariant~$t$ (the invariant~$m$, respectively)
are the same for the fields~$\Bbbk$
and~$\K$ provided that~$\Bbbk$ is algebraically
closed in~$\K$,
so that in the notation of Lemma~\ref{lemma:GL-uniform} and
Corollary~\ref{corollary:alg-group-uniform}
these invariants are bounded in the family~$\G$.
\end{remark}

To prove Theorem~\ref{theorem:Q} we will start with
an assertion that is more or less
its particular case. 
Recall that a $G$-equivariant morphism $\phi\colon Y\to S$ of normal
varieties acted on by a finite group $G$ is a \emph{$G$-Mori fiber space},
if $Y$ has terminal $G\Q$-factorial singularities, $\dim(S)<\dim(Y)$,
the fibers of $\phi$ are connected, the anticanonical divisor $-K_Y$
is $\phi$-ample, and the relative $G$-invariant Picard number
$\rho^G(Y/S)$ equals~$1$.

\begin{lemma}
\label{lemma:RC-over-Q-BFS}
Suppose that $\Bbbk$ is a finitely generated field over $\Q$.
Let $\G_{\RC}^{\Bbbk}(n)$ be the family of groups~\mbox{$\Bir(X)$},
where $X$
varies in the set
of rationally connected varieties of dimension~$n$ over some field $\K$,
and $\K$ itself varies in the set of finitely generated
fields over $\Bbbk$ such that $\Bbbk$ is algebraically closed
in $\K$. Assume that Conjecture~BAB
holds in dimension $n$.
Then the family $\G^{\Bbbk}_{\RC}(n)$ has uniformly bounded
finite subgroups.
\end{lemma}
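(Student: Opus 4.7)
The plan is to combine the uniform Jordan property from Theorem~\ref{theorem:RC-Jordan}, an induction on dimension via $G$-equivariant Minimal Model Program, and the uniform boundedness of finite subgroups of linear algebraic groups provided by Corollary~\ref{corollary:alg-group-uniform}. Since Theorem~\ref{theorem:RC-Jordan} holds over an arbitrary field of characteristic zero, it applies to the enlarged family $\G^{\Bbbk}_{\RC}(n)$, so assuming Conjecture~BAB in dimension $n$ this family is uniformly Jordan. It therefore suffices to produce a bound on the orders of finite \emph{abelian} subgroups $A\subset\Bir(X)$ that is uniform as $X$ ranges over the family.

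I would proceed by induction on $n$. For $n=1$, every rationally connected variety over $\K$ is isomorphic to $\P^1_{\K}$, so $\Bir(X)\simeq\PGL_2(\K)$, and Corollary~\ref{corollary:alg-group-uniform} applied to $\Gamma=\PGL_2$ yields the desired bound. For the induction step, given a finite abelian subgroup $A\subset\Bir(X)$, pass to a smooth projective $A$-equivariant regularization via Lemma-Definition~\ref{regularization}, and then run an $A$-equivariant $K$-Minimal Model Program in the sense of the equivariant version of~\cite{BCHM}. Since $X$ is uniruled, this terminates at an $A$-Mori fiber space $\pi\colon Y\to S$, whose generic fiber $Y_\eta$ is a Fano variety over $\K(S)$ with terminal singularities and relative Picard rank one. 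This gives an exact sequence
\[
1\longrightarrow A_f\longrightarrow A\longrightarrow A_b\longrightarrow 1,
\]
where $A_b\subset\Bir(S)$ is the image of $A$ and $A_f$ acts fiber-wise.

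The base $S$ is rationally connected of dimension strictly less than $n$; since $X$ is geometrically irreducible over $\K$, the variety $S$ is geometrically irreducible over $\K$ as well, so $\Bbbk$ remains algebraically closed in $\K(S)$ and the inductive hypothesis bounds $|A_b|$ uniformly. To bound $|A_f|$, invoke Conjecture~BAB in dimension~$\le n$: the possible generic fibers $Y_\eta$ belong to a bounded family, so there is an integer $m=m(n)$, depending only on $n$, such that $-mK_{Y_\eta}$ is very ample and defines an embedding of $Y_\eta$ into a projective space of bounded dimension $N$. Because this pluri-anticanonical embedding is canonical, it is defined over $\K(S)$ and automatically $A_f$-equivariant, yielding
\[
A_f\hookrightarrow\Aut(Y_\eta)\subset\PGL_{N+1}\big(\K(S)\big).
\]
Since $\K(S)$ is finitely generated over $\Bbbk$ with $\Bbbk$ algebraically closed in it, Corollary~\ref{corollary:alg-group-uniform} applied with $\Gamma=\PGL_{N+1}$ furnishes a uniform bound on $|A_f|$, closing the induction.

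The main obstacle I anticipate is the descent of BAB-boundedness to the non-closed field $\K(S)$: Conjecture~BAB is stated over an algebraically closed field, but we need the embedding of $Y_\eta$ into a fixed $\P^N$ to be defined over $\K(S)$ itself so that $A_f$ sits inside $\PGL_{N+1}(\K(S))$. The remedy is precisely that the chosen linear system $|-mK_{Y_\eta}|$ is canonical and thus descends; this is the one place where one has to be careful. A secondary bookkeeping point is the correct formulation of the $A$-equivariant MMP over the non-closed field $\K$ preserving $A\Q$-factoriality and producing a genuine $A$-Mori fiber space, but this is by now standard.
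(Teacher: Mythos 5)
Your proposal is correct and follows essentially the same route as the paper: regularization, a $G$-equivariant MMP terminating in a Mori fiber space, the fiberwise/base exact sequence, Conjecture~BAB plus a bounded pluri-anticanonical embedding into $\PGL_{N+1}$ of a finitely generated field (handled by Corollary~\ref{corollary:alg-group-uniform}), and induction on dimension. The only differences are cosmetic: your initial reduction to abelian subgroups via Theorem~\ref{theorem:RC-Jordan} is superfluous since the rest of your argument already bounds arbitrary finite subgroups; the paper invokes BAB only when $\dim S=0$ and otherwise applies the induction hypothesis to the generic fiber as well as to the base; and for $n=1$ a rationally connected curve over a non-closed field $\K$ is a conic rather than $\P^1_{\K}$ --- a slip that your own anticanonical-embedding argument repairs automatically.
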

\begin{proof}
Let $X$ be a rationally connected variety of dimension $n$ over a
field $\K$,
and let~\mbox{$G\subset\Bir(X)$} be a finite group.
By Lemma-Definition~\ref{regularization} there exists a smooth regularization
$\tilde X$ of~$G$.
Note that $\tilde{X}$ is rationally connected since so is $X$.
Run a $G$-Minimal Model Program on~$\tilde{X}$.
This is possible due to an equivariant version
of~\cite[Corollary~1.3.3]{BCHM} and
\cite[Theorem~1]{MiyaokaMori},
since rational connectedness
implies uniruledness.
We obtain a rationally connected
variety $Y$ birational to $\tilde{X}$ (and thus to $X$)
with a faithful (regular) action of
the group $G$ and a structure
$\phi\colon Y\to S$ of a $G$-Mori fiber space.

Suppose that $\dim(S)=0$. Then $Y$ is a Fano variety with terminal
singularities. Using Conjecture~BAB
and arguing as in the proof of~\cite[Lemma~4.6]{Prokhorov-Shramov}
we see that there exists a positive integer $N=N(n)$
that does not depend on the field $\K$ and on the
variety~$Y$ (and thus also on $X$) such that $G\subset\PGL_N(\K)$.
Therefore, in this case the assertion follows
from Theorem~\ref{theorem:GL-number-field}.

Now suppose that $\dim(S)>0$.
Consider an exact sequence of groups
$$
1\longrightarrow G_{f}\longrightarrow G\longrightarrow G_{b}\longrightarrow 1,
$$
where the action of $G_{f}$ is fiberwise with respect to $\phi$ and
$G_b$ is the image of $G$ in $\Aut(S)$.
We have an embedding
$G_{f}\subset\Aut(Y_{\eta})$, where $Y_{\eta}$
is the fiber of $\phi$ over the generic
scheme point $\eta$ of $S$ (cf. the proof
of Proposition~\ref{proposition:technical}).
Note that $S$ is rationally connected since it is
dominated by a rationally connected
variety $Y$.
Moreover, $Y_{\eta}$ is a Fano variety
with at worst terminal singularities
by~\cite[\S 5-1]{KMM},
so that $Y_{\eta}$ is rationally
connected by~\cite[Theorem~1]{Zhang-Qi-2006}.
Note also that $Y_{\eta}$ is defined over the field
$\K_{\eta}=\K(S)$ that is finitely generated over $\K$
(and thus over $\Bbbk$).
Since $\phi$ has connected fibers,
the field $\K$ is algebraically closed in $\K_{\eta}$, so that
$\Bbbk$ is algebraically closed in $\K_{\eta}$ as well.

Let $\G_{f}$ and $\G_{b}$ be the families of groups
arising in the above procedure
as the groups~$G_{f}$ and~$G_{b}$, respectively,
for various choices of a field $\K$, a variety $X$ and a
finite group~\mbox{$G\subset\Bir(X)$}.
Since $\dim(S)<n$ and $\dim(Y_{\eta})<n$, induction in $n$
shows that both~$\G_b$ and~$\G_{f}$ have universally bounded finite
subgroups. Thus the assertion follows by Lemma~\ref{lemma:BFS-by-BFS}.
\end{proof}

Now we are ready to prove Theorem~\ref{theorem:Q}.

\begin{proof}[{Proof of Theorem~\ref{theorem:Q}}]
Let $\G_{\RC}(X)$ and $\G_{\ab}(X)$
be families of groups
defined in Corollary~\ref{corollary:technical}.
By Lemma~\ref{lemma:RC-over-Q-BFS} the family
$\G_{\RC}(X)$ has uniformly bounded finite subgroups.
By Corollary~\ref{corollary:abelian-variety-BFS}
the family $\G_{\ab}(X)$ also has uniformly bounded finite subgroups.
Therefore, the assertion is implied by Corollary~\ref{corollary:technical}(i).
\end{proof}

Finally, we use Theorem~\ref{theorem:Q} to derive
Corollary~\ref{corollary:Serre-OK}.

\begin{proof}[{Proof of Corollary~\ref{corollary:Serre-OK}}]
Let $\Bbbk$ be the algebraic closure of $\mathbb Q$ in $K$ and
let
$$\Aut_{\Bbbk}(K)\subset \Aut(K)$$
be the maximal subgroup of $\Aut(K)$ that acts trivially on $\Bbbk$.
Then
$\Aut_{\Bbbk}(K)$ is normal in~$\Aut(K)$,
and
$$\Aut(\Bbbk)\simeq\Aut(K)/\Aut_{\Bbbk}(K)$$
is a finite group.
Thus it is sufficient to show that $\Aut_{\Bbbk}(K)$
has bounded finite subgroups.
Let $R\subset K$ be a finitely generated $\Bbbk$-subalgebra
that such that the field $K$
is the field of fractions of $R$.
The assertion follows by Theorem~\ref{theorem:Q}
applied to the (affine) variety~\mbox{$X=\operatorname{Spec}(R)$}.
\end{proof}

\section{Solvably Jordan groups}
\label{section-Solvably-Jordan-groups}

\begin{definition}
\label{definition:solvably-Jordan}
Let $\G$ be a family of groups. We say that
$\G$ is \emph{uniformly solvably Jordan}
if there exists a constant $J_S=J_S(\Gamma)$ such that
for any group $\Gamma\in\G$ and any finite subgroup
$G\subset\Gamma$ there
exists a solvable subgroup
$S\subset G$ of index at most $J_S$.
We say that a group $\Gamma$
is \emph{solvably Jordan}
if the family $\{\Gamma\}$ is uniformly solvably Jordan.
\end{definition}

\begin{remark}\label{remark:solvably-Jordan}
If a family $\G$ is uniformly Jordan,
then it is uniformly solvably Jordan.
\end{remark}

D.\,Allcock asked the following.

\begin{question}\label{question:solvable}
Which varieties have solvably Jordan groups of birational
automorphisms?
\end{question}

The purpose of this section is to give an answer to
Question~\ref{question:solvable}.

\begin{lemma}
\label{lemma:solvably-Jordan-by-solvably-Jordan}
Let $\G_1$ and $\G_2$ be uniformly solvably Jordan families of groups.
Let $\G$ be a family of groups $G$ such that there exists
an exact sequence
\begin{equation}\label{eq:solvable-sequence}
1\longrightarrow G_1\longrightarrow G\longrightarrow G_2\longrightarrow 1,
\end{equation}
where $G_1\in\G_1$ and $G_2\in\G_2$.
Then $\G$ is uniformly solvably Jordan.
\end{lemma}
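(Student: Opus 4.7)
The plan is, given a finite group $G \in \G$ sitting in the exact sequence~\eqref{eq:solvable-sequence} with $G_1 \in \G_1$ and $G_2 \in \G_2$, to produce a solvable subgroup of $G$ of index bounded by a constant depending only on $J_S(\G_1)$ and $J_S(\G_2)$. The naive attempt---pulling back a solvable subgroup of $G_2$ and intersecting with a solvable subgroup of $G_1$---fails, because the chosen solvable subgroup of $G_1$ need not be stable under conjugation by lifts of $G_2$, and an intersection of its conjugates need not have bounded index inside $G_1$. I will instead replace it by a characteristic solvable subgroup of $G_1$ and handle the quotient by a centralizer argument.

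My first step is to pass from the given solvable subgroup of $G_1$ to its normal core, i.e., the kernel of the permutation action of $G_1$ on the cosets of that subgroup. This normal core is a solvable normal subgroup of $G_1$ whose index is bounded by $J_S(\G_1)!$, and in particular it lies inside the solvable radical $R = R(G_1)$. Since $R$ is characteristic in $G_1$ and $G_1$ is normal in $G$, the subgroup $R$ is normal in $G$. It thus suffices to find a solvable subgroup of bounded index in $\bar G = G/R$, which fits into an extension $1 \to N \to \bar G \to G_2 \to 1$ with $|N| = [G_1 : R]$ bounded in terms of $J_S(\G_1)$.

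For the second step, I would pick a solvable subgroup $S_2 \subset G_2$ of index at most $J_S(\G_2)$, let $\tilde S \subset \bar G$ be its preimage, and let $C = C_{\tilde S}(N)$ be the kernel of the conjugation homomorphism $\tilde S \to \Aut(N)$. Boundedness of $|\Aut(N)|$ gives boundedness of $[\bar G : C]$. To check that $C$ is itself solvable, observe that $C \cap N = Z(N)$ lies in the center of $C$ by the very definition of the centralizer, while the projection $C \to \bar G / N = G_2$ has image inside $S_2$ and kernel $Z(N)$; hence $C/Z(N)$ embeds into the solvable group $S_2$. So $C$ is a central extension of a solvable group by an abelian group, and is therefore solvable. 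Pulling $C$ back along $G \to \bar G$ then produces an extension of the solvable group $R$ by the solvable group $C$, which is the desired solvable subgroup of $G$ of bounded index.

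The main obstacle---presumably the point at which the first version of the proof was insufficient---is that Definition~\ref{definition:solvably-Jordan} places no bound on the derived length of the solvable subgroups, only on their index. Any argument that iteratively applied the hypothesis within a composition series of $G$ would collapse, since the derived length would grow without control. The centralizer trick succeeds precisely because it adds only a single abelian layer $Z(N)$ on top of a subgroup of $S_2$, so solvability is preserved regardless of the derived length of $S_2$, and one obtains the required solvable subgroup in a single step rather than by iteration.
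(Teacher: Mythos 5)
Your proof is correct and follows essentially the same route as the paper's: both pass to the solvable radical of $G_1$ (characteristic, hence normal in $G$, and of bounded index via the normal core of a solvable subgroup), reduce to the case where the kernel is of bounded order, and then take the centralizer of that kernel, which has bounded index and is solvable because it meets the kernel in a central abelian subgroup with solvable quotient. The only difference is the order of operations (you quotient by the radical before restricting to the preimage of $S_2$, the paper does the reverse), which does not affect the argument.
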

\begin{proof}
It is enough to prove the assertion assuming that $G$
(and thus also $G_1$ and $G_2$) is finite.
Replacing $G_2$ by its solvable subgroup 
of bounded index and replacing $G$ by the preimage of the latter
subgroup, we may assume that the group $G_2$ in~\eqref{eq:solvable-sequence}
is solvable. 

Let $S_1\subset G_1$ be the maximal normal solvable 
subgroup of $G_1$. Then $S_1$ is preserved by automorphisms
of $G_1$, and thus $S_1$ is a normal subgroup 
of the group $G$. Since an extension of a solvable 
group by a solvable group is again solvable, and the index of~$S_1$ in~$G_1$ 
is bounded, we may replace $G_1$ and $G$
by $G_1/S_1$ and $G/S_1$, respectively, and assume that 
the order of~$G_1$ in~\eqref{eq:solvable-sequence} is bounded.

Let $C\subset G$ be the centralizer of the subgroup 
$G_1$. Since the subgroup $G_1\subset G$ is normal, we conclude that 
$C\subset G$ is also normal. 
Thus the group $G/C$ embeds 
into the group $\Aut(G_1)$. Since $|G_1|$ is bounded, we
see that $|\Aut(G_1)|$ is bounded as well. This implies that 
the index $[G:C]$ is bounded. Put $H=C\cap G_1$. Then the group 
$H$ is abelian. On the other hand, 
the group $C/H$ embeds into $G_2$, so that $C/H$ is solvable.
Since an extension of a solvable
group by a solvable group is solvable, we see that $C$ is a solvable subgroup 
of $G$ of bounded index.
\end{proof}

\begin{proposition}\label{proposition:solvably-Jordan}
Let $X$ be a variety of dimension $n$.
Suppose that Conjecture~BAB
holds in dimension $n$. Then the group $\Bir(X)$
is solvably Jordan.
\end{proposition}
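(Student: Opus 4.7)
The plan is to apply Proposition~\ref{proposition:technical} to reduce the problem to the four pieces $G_{\RC}$, $G_N$, $G_L$, $G_{\ab}$, verify that each sits inside a uniformly solvably Jordan family, and then assemble the tower using Lemma~\ref{lemma:solvably-Jordan-by-solvably-Jordan}. So let $G\subset\Bir(X)$ be any finite subgroup. After fixing a birational polarization~$L$ on the base of the maximal rationally connected fibration of~$X$, Proposition~\ref{proposition:technical} provides the three short exact sequences \eqref{seq1}, \eqref{seq2}, \eqref{seq3}, whose kernels and cokernels together chop $G$ into~$G_{\RC}$, $G_L$, $G_{\ab}$ and $G_N$.

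Next I would verify that each of the four ``atoms'' is uniformly solvably Jordan, since then Remark~\ref{remark:solvably-Jordan} allows me to treat uniformly Jordan families as uniformly solvably Jordan ones. The group $G_N$ sits in $\Aut(N)$ for the fixed finitely generated abelian group $N=\NS^{\W}(X_{\nun})$, hence lies in a family with uniformly bounded finite subgroups by Corollary~\ref{corollary:lattice}. The group $G_L$ lies in $\Bir(X_{\nun},L)$, which is finite by Lemma~\ref{lemma-minimal-model}. The group $G_{\ab}$ sits in $\Aut_g(A)$ for an abelian variety of dimension $q(X)$, and the family of such groups is uniformly Jordan by Corollary~\ref{corollary:abelian-variety-Jordan}. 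Finally, $G_{\RC}$ sits inside $\Bir(X_{\RC})$ for a rationally connected variety of dimension at most~$n$ (over a possibly larger field), so under our assumption that Conjecture~BAB holds in dimension~$n$, Theorem~\ref{theorem:RC-Jordan} shows that this family is uniformly Jordan.

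It then remains to assemble the tower. Applying Lemma~\ref{lemma:solvably-Jordan-by-solvably-Jordan} to the sequence \eqref{seq3}, the family of groups of the form $G_{\alg}$ is uniformly solvably Jordan. Applying it again to \eqref{seq2}, the family of groups $G_{\nun}$ is uniformly solvably Jordan. Applying it a third time to \eqref{seq1} yields that the ambient finite subgroup $G\subset\Bir(X)$ admits a solvable subgroup of index bounded by a constant depending only on~$X$, which is precisely the conclusion that $\Bir(X)$ is solvably Jordan.

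I do not expect any real obstacle here: the entire argument is a bookkeeping exercise made possible by the fact that Lemma~\ref{lemma:solvably-Jordan-by-solvably-Jordan}, unlike Lemma~\ref{lemma:Jordan-by-BFS} or Lemma~\ref{lemma:Jordan-by-BFS-with-BR}, is symmetric in its two arguments and requires no auxiliary bounded-rank hypothesis. This is exactly what makes the solvably Jordan property more robust than the Jordan property under extensions (compare Remark~\ref{remark:oops}), and it is what lets us dispose of the vanishing-irregularity hypothesis that was needed in Theorem~\ref{theorem:main}(iii).
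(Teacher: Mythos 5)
Your proposal is correct and follows essentially the same route as the paper's own proof: decompose $G$ via Proposition~\ref{proposition:technical}, check that the families of groups $G_{\RC}$, $G_{\ab}$, $G_{N}$, $G_{L}$ are uniformly solvably Jordan using Theorem~\ref{theorem:RC-Jordan}, Corollary~\ref{corollary:abelian-variety-Jordan}, Corollary~\ref{corollary:lattice}, Lemma~\ref{lemma-minimal-model} and Remarks~\ref{remark:BFS-implies-Jordan} and~\ref{remark:solvably-Jordan}, then assemble with Lemma~\ref{lemma:solvably-Jordan-by-solvably-Jordan}. Your closing observation about the symmetry of that lemma being what removes the irregularity hypothesis is also accurate.
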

\begin{proof}
Choose some birational polarization $L$ on the base of the maximal
rationally connected fibration of $X$.
Let $\G_{\RC}(X)$, $\G_{\ab}(X)$, $\G_{N}(X)$ and $\G_{L}(X)$
be the families of groups
arising in Proposition~\ref{proposition:technical}
as the groups $G_{\RC}$, $G_{\ab}$, $G_{N}$ and $G_{L}$, respectively,
for various choices of finite groups $G\subset\Bir(X)$.
Applying Theorem~\ref{theorem:RC-Jordan},
Corollary~\ref{corollary:abelian-variety-Jordan},
Corollary~\ref{corollary:lattice} and
Lemma~\ref{lemma-minimal-model}
together with Remarks~\ref{remark:BFS-implies-Jordan}
and~\ref{remark:solvably-Jordan}, we see
that the families $\G_{\RC}(X)$, $\G_{\ab}(X)$, $\G_{N}(X)$ and $\G_{L}(X)$
are uniformly solvably Jordan.
Now the assertion is implied by
Proposition~\ref{proposition:technical} and
Lemma~\ref{lemma:solvably-Jordan-by-solvably-Jordan}.
\end{proof}

\begin{remark}\label{remark:solvably-Jordan-without-BAB}
Note that for non-unirational varieties the argument
used in the proof of Proposition~\ref{proposition:solvably-Jordan}
does not rely on Conjecture~BAB. Similarly, in dimension $n\le 3$
Proposition~\ref{proposition:solvably-Jordan} also holds
without any additional assumptions
(cf. Corollary~\ref{corollary:dim-3}).
\end{remark}

\section{Discussion}
\label{section:discussion}

In this section we list several open questions
related to the previous consideration, and mention
some possible approaches to them.

\begin{question}\label{question:degenerate-fibers}
Can one use information on degenerate fibers of
certain fibrations to establish Jordan property for automorphism
(resp., birational automorphism) groups?
\end{question}

We note that a typical case that is not covered by Theorem~\ref{theorem:main}
is a variety $X$ with a structure of fibration $\phi\colon X\to X_{\nun}$
such that $\phi$ has rationally connected fibers and $X_{\nun}$ is
a non-uniruled variety with irregularity $q(X_{\nun})=q(X)>0$.
The situation when $X_{\nun}=A$ is an abelian variety, and $\phi$ is a conic
bundle, is already interesting and not completely
accessible on our current level of understanding the geometry of
such fibrations. For example, from~\cite{Zarhin10} we know that
if $X\simeq A\times\P^1$, then the group
$\Bir(X)$ is not Jordan. On the other hand, even in dimension $3$
we are far from being able to analyze even similar examples.
For example, if $\phi$ is a $\P^1$-bundle over an abelian surface $A$, we
do not know how to deal with the Jordan property, except
for the cases that are somehow reduced to the direct product
(say, we do not know if there is a Jordan example of this kind).
Furthermore, if~$\phi$ is a conic bundle with a non-trivial
discriminant $\Delta\subset A$, it seems reasonable to try
(but it is not yet clear for us how) to use the geometry of $\Delta$ to
estimate the image of~$\Bir(X)$ under the natural map $\Bir(X)\to\Aut(A)$.
We expect that
a good starting point here may be to understand the influence
of ampleness of $\Delta$ on Jordan property of $\Bir(X)$.
It is also possible that similar considerations may
help to find out if the Jordan property holds for groups of automorphisms
of affine varieties (cf.~\cite[Question~2.14]{Popov2011}).

The next thing we want to mention is the following.

\begin{question}\label{question:abundance}
Can one use some canonically defined (rational) maps
to provide a more geometric proof of Theorems~\ref{theorem:main}
and~\ref{theorem:Q}?
\end{question}

A general observation is that if a rational map $\phi\colon X\dasharrow X'$
is equivariant with respect to $\Bir(X)$ (or some subgroup
$G\subset\Bir(X)$),
then one has an exact sequence associated with $\phi$
similar to~\eqref{seq1}. This observation was applied to the maximal
rationally connected fibration in the proof of
Proposition~\ref{proposition:technical} and to a $G$-Mori fibration
in the proof of
Lemma~\ref{lemma:RC-over-Q-BFS}.
On the other hand, it is tempting to use other maps that are
canonically defined and thus equivariant. In particular,
it is possible that some information
may be obtained from analysing the Albanese map
$$alb\colon X\dasharrow\Alb(X)$$
and making use of the fact that the target
space $\Alb(X)$ is an abelian variety. We feel that the corresponding
part of our current approach is
somehow ``dual'' to this.
Furthermore, if $X$ is a non-uniruled variety, one can consider a
pluri-canonical map
$$\phi_{can}\colon X\dasharrow X_{can}$$
and make use of the properties of the fibers of
$\phi_{can}$. On the other hand, this approach may be hard to take
since we do not really know much about the fibers of the Albanese map
and about the image of the pluri-canonical map.

The last question we want to discuss is the following.

\begin{question}\label{question:uniform}
Can one prove ``uniform'' analogs of Theorems~\ref{theorem:Q}
and~\ref{theorem:main} for some natural families of varieties,
i.\,e. show that certain families of groups of birational
automorphisms are uniformly Jordan, or have uniformly
bounded finite subgroups?
\end{question}

Of course, such result is not possible in the most general case.
Indeed, for any $m$ the symmetric group $\mathrm{S}_m$ acts by automorphisms
of some curve $C_m$ defined over $\Q$.
Actually, the only (general enough)
results in this direction we are aware of are
Lemma~\ref{lemma:RC-over-Q-BFS} and Theorem~\ref{theorem:RC-Jordan}.
Another result of similar flavour is
Corollary~\ref{corollary:abelian-variety-BFS-torsion}.
A more reasonable version
of Question~\ref{question:uniform} may be the following.

\begin{question}\label{question:invariants}
Can one bound the constants appearing in Theorems~\ref{theorem:Q}
and~\ref{theorem:main} uniformly for some natural families of varieties
in terms of some invariants of these varieties?
\end{question}

A partial answer to Question~\ref{question:invariants}
that illustrates what we would like to know in some more
wide context is a bound on the order of birational automorphism
group of an \mbox{$n$-dimensional} variety of general type
in terms of its canonical volume
(see \cite{Hacon-McKernan-Xu}).
The most general assertion that we may suggest in this direction is as follows.
Suppose that~$\X$ is a family of $n$-dimensional varieties
such that for any $X\in\X$ one can choose a \emph{very ample}
birational polarization $H_X$ on the base of the maximal
rationally connected fibration~\mbox{$\phi_{\RC}\colon X\dasharrow X_{\nun}$}
so that the volume of $H_X$ is bounded by some constant~\mbox{$D=D(\X)$}.
(Recall from Definition~\ref{definition:polarization}
that this polarization is supposed
to be defined not on $X_{\nun}$ itself but on one of its quasi-minimal
models.)
Then in the assumptions of Theorem~\ref{theorem:main}(ii),(iii)
the family $\BB$ of groups $\Bir(X)$, $X\in\X$,
is uniformly Jordan. This directly follows from
an observation that for a family of (polarized) varieties of bounded degree
all essential characteristics of the varieties involved in the
proof of Proposition~\ref{proposition:technical}
(i.\,e. rank and order of torsion of Neron--Severi group
and irregularity)
are bounded.
Similarly, in the assumptions of
Theorems~\ref{theorem:main}(i) and~\ref{theorem:Q}
the family $\BB$ has uniformly bounded finite subgroups,
provided that Conjecture~\ref{conjecture:torsion} holds in dimension~$d$
that equals the maximal irregularity for
varieties $X\in\X$. This follows by the same argument as above
together with Corollary~\ref{corollary:abelian-variety-BFS-torsion}.


\begin{thebibliography}{KMMT00}

\bibitem[BCHM10]{BCHM}
Caucher Birkar, Paolo Cascini, Christopher~D. Hacon, and James McKernan.
\newblock Existence of minimal models for varieties of log general type.
\newblock {\em J. Amer. Math. Soc.}, 23(2):405--468, 2010.

\bibitem[Bir12]{Birkar2012}
Caucher Birkar.
\newblock Existence of log canonical flips and a special {LMMP}.
\newblock {\em Publ. Math. Inst. Hautes \'Etudes Sci.}, pages 325--368, 2012.

\bibitem[BM08]{Bierstone2008}
Edward Bierstone and Pierre~D. Milman.
\newblock Functoriality in resolution of singularities.
\newblock {\em Publ. Res. Inst. Math. Sci.}, 44(2):609--639, 2008.

\bibitem[Bor96]{Borisov-1996}
Alexandr Borisov.
\newblock Boundedness theorem for {F}ano log-threefolds.
\newblock {\em J. Algebraic Geom.}, 5(1):119--133, 1996.

\bibitem[CR62]{Curtis1962}
Charles~W. Curtis and Irving Reiner.
\newblock {\em Representation theory of finite groups and associative
  algebras}.
\newblock Pure and Applied Mathematics, Vol. XI. Interscience Publishers, a
  division of John Wiley \& Sons, New York-London, 1962.

\bibitem[Edi10]{Edinburgh-2010}
Problems for the {E}dinburgh workshop on {C}remona groups, 2010.

\bibitem[Fak03]{Fakhruddin03}
Najmuddin Fakhruddin.
\newblock Questions on self maps of algebraic varieties.
\newblock {\em J. Ramanujan Math. Soc.}, 18(2):109--122, 2003.

\bibitem[GHS03]{Graber-Harris-Starr-2003}
Tom Graber, Joe Harris, and Jason Starr.
\newblock Families of rationally connected varieties.
\newblock {\em J. Am. Math. Soc.}, 16(1):57--67, 2003.

\bibitem[Han87]{Hanamura1987}
Masaki Hanamura.
\newblock On the birational automorphism groups of algebraic varieties.
\newblock {\em Compositio Math.}, 63(1):123--142, 1987.

\bibitem[Han88]{Hanamura1988}
Masaki Hanamura.
\newblock Structure of birational automorphism groups. {I}. {N}onuniruled
  varieties.
\newblock {\em Invent. Math.}, 93(2):383--403, 1988.

\bibitem[HMX10]{Hacon-McKernan-Xu}
Christopher Hacon, James McKernan, and Chenyang Xu.
\newblock On the birational automorphisms of varieties of general type.
\newblock {\em Ann. of Math.}, 177(3):1077--1111, 2013,.

\bibitem[KMM87]{KMM}
Yujiro Kawamata, Katsumi Matsuda, and Kenji Matsuki.
\newblock Introduction to the minimal model problem.
\newblock In {\em Algebraic geometry, Sendai, 1985}, volume~10 of {\em Adv.
  Stud. Pure Math.}, pages 283--360. North-Holland, Amsterdam, 1987.

\bibitem[Kol96]{Kollar-1996-RC}
J{\'a}nos Koll{\'a}r.
\newblock {\em Rational curves on algebraic varieties}, volume~32 of {\em
  Ergebnisse der Mathematik und ihrer Grenzgebiete. 3. Folge. A Series of
  Modern Surveys in Mathematics [Results in Mathematics and Related Areas. 3rd
  Series. A Series of Modern Surveys in Mathematics]}.
\newblock Springer-Verlag, Berlin, 1996.

\bibitem[Kol92]{Utah}
J{\'a}nos Koll{\'a}r, editor.
\newblock {\em Flips and abundance for algebraic threefolds}.
\newblock Soci\'et\'e Math\'ematique de France, Paris, 1992.
\newblock Papers from the Second Summer Seminar on Algebraic Geometry held at
  the University of Utah, Salt Lake City, Utah, August 1991, Ast\'erisque No.
  211 (1992).



\bibitem[KMMT00]{KMMT-2000}
J{\'a}nos Koll{\'a}r, Yoichi Miyaoka, Shigefumi Mori, and Hiromichi Takagi.
\newblock Boundedness of canonical {$\mathbf{Q}$}-{F}ano 3-folds.
\newblock {\em Proc. Japan Acad. Ser. A Math. Sci.}, 76(5):73--77, 2000.

\bibitem[KM98]{Kollar-Mori-1988}
J{\'a}nos Koll{\'a}r and Shigefumi Mori.
\newblock {\em Birational geometry of algebraic varieties}, volume 134 of {\em
  Cambridge Tracts in Mathematics}.
\newblock Cambridge University Press, Cambridge, 1998.
\newblock With the collaboration of C. H. Clemens and A. Corti, Translated from
  the 1998 Japanese original.

\bibitem[Lan83]{Lang-1983-book}
Serge Lang.
\newblock {\em Fundamentals of {D}iophantine geometry}.
\newblock Springer-Verlag, New York, 1983.

\bibitem[Mat02]{Matsuki2002}
Kenji Matsuki.
\newblock {\em Introduction to the {M}ori program}.
\newblock Universitext. Springer-Verlag, New York, 2002.

\bibitem[Mer96]{Merel}
Lo{\"{\i}}c Merel.
\newblock Bornes pour la torsion des courbes elliptiques sur les corps de
  nombres.
\newblock {\em Invent. Math.}, 124(1-3):437--449, 1996.

\bibitem[MM86]{MiyaokaMori}
Yoichi Miyaoka and Shigefumi Mori.
\newblock A numerical criterion for uniruledness.
\newblock {\em Ann. of Math. (2)}, 124(1):65--69, 1986.

\bibitem[MS94]{MortonSilverman94}
Patrick Morton and Joseph~H. Silverman.
\newblock Rational periodic points of rational functions.
\newblock {\em Internat. Math. Res. Notices}, (2):97--110, 1994.

\bibitem[Poo]{Poonen12}
Bjorn Poonen.
\newblock Uniform boundedness of rational points and preperiodic points.
\newblock {\em ArXiv e-print}, 1206.7104.

\bibitem[Pop11]{Popov2011}
Vladimir~L. Popov.
\newblock On the {M}akar-{L}imanov, {D}erksen invariants, and finite
  automorphism groups of algebraic varieties.
\newblock In {\em Peter Russell's Festschrift, Proceedings of the conference on
  Affine Algebraic Geometry held in Professor Russell's honour, 1--5 June 2009,
  McGill Univ., Montreal.}, volume~54 of {\em Centre de Recherches
  Math\'ematiques CRM Proc. and Lect. Notes}, pages 289--311, 2011.

\bibitem[PS12]{Prokhorov-Shramov}
Yuri Prokhorov and Constantin Shramov.
\newblock {Jordan property for {C}remona groups}.
\newblock {\em ArXiv e-print}, 1211.3563, 2012, to appear in Amer. J. Math.

\bibitem[Ser07]{Serre2007}
Jean-Pierre Serre.
\newblock Bounds for the orders of the finite subgroups of {$G(k)$}.
\newblock In {\em Group representation theory}, pages 405--450. EPFL Press,
  Lausanne, 2007.

\bibitem[Ser09]{Serre2009}
Jean-Pierre Serre.
\newblock A {M}inkowski-style bound for the orders of the finite subgroups of
  the {C}remona group of rank 2 over an arbitrary field.
\newblock {\em Mosc. Math. J.}, 9(1):193--208, 2009.

\bibitem[Zar10]{Zarhin10}
Yuri~G. Zarhin.
\newblock Theta groups and products of abelian and rational varieties.
\newblock {\em Proc. Edinburgh Math. Soc.}, 57(1):299--304, 2014.

\bibitem[Zha06]{Zhang-Qi-2006}
Qi~Zhang.
\newblock Rational connectedness of log {$\mathbf{Q}$}-{F}ano varieties.
\newblock {\em J. Reine Angew. Math.}, 590:131--142, 2006.

\end{thebibliography}
\end{document}